\pdfoutput=1 %so that arXiv doesn't fudge it
\documentclass[11pt]{amsart}
\usepackage[utf8]{inputenc}
\usepackage{amssymb,amsmath,amsthm,amsfonts,enumerate,enumitem,colonequals,tikz-cd,microtype, quiver}
%amssymb, amsmath, enumerate, amsthm, cite, colonequals, stmaryrd, graphicx, fancyvrb, fancyhdr, indentfirst, setspace, enumitem
% \usepackage[mathcal]{eucal}
\usepackage[cal=euler,bfcal,bb=px,bfbb]{mathalpha}
\usepackage[top=3.75cm, bottom=3cm, left=3.5cm, right=3.5cm]{geometry}
\usepackage[all,cmtip]{xy}\xyoption{dvips}

\makeatletter
\@namedef{subjclassname@2020}{\textup{2020} Mathematics Subject Classification}
\makeatother

\usepackage{xcolor}
\colorlet{darkblue}{blue!55!black}
\colorlet{darkcyan}{cyan!50!black}
\colorlet{darkgreen}{green!60!black}

\usepackage{hyperref}
\hypersetup{
    colorlinks=true,
    linkcolor= darkblue,
    urlcolor= darkcyan,
    citecolor= darkgreen,
}

\def\eqref#1{\textcolor{darkblue}{(\ref{#1})}}

\PassOptionsToPackage{hyphens}{url}
\usepackage{hyperref}

\usepackage[nameinlink]{cleveref} %the nameinlink option can be left out; see what suits
\Crefformat{section}{#2\S#1#3} % to get \S instead of \Section
\Crefmultiformat{section}{#2\S\S#1#3}{ and~#2#1#3}{, #2#1#3}{, and~#2#1#3}
\crefname{diagram}{diagram}{diagrams}
\Crefname{diagram}{Diagram}{Diagrams}
\creflabelformat{diagram}{#2(\textup{#1})#3}

\DeclareFontFamily{OT1}{pzc}{}
\DeclareFontShape{OT1}{pzc}{m}{it}{<-> s * [1.100] pzcmi7t}{}
\DeclareMathAlphabet{\mathchanc}{OT1}{pzc}{m}{it}

\definecolor{darkblue}{RGB}{10,92,153} \setlength\marginparwidth{7em}
\newcommand{\Irr}{{\mathchanc{Irr}}}
\newcommand{\kdot}{{{\,\begin{picture}(1,1)(-1,-2)\circle*{2}\end{picture}\,}}}
\newcommand{\sR}{{\mathchanc{R}\!}}  
\newcommand{\sHom}[0]{{\mathchanc{Hom}}}

%%% Overfill notice --------------------------------------------------

\usepackage[pagewise]{lineno}
\overfullrule = 100pt
\let\oldequation\equation
\let\oldendequation\endequation
\renewenvironment{equation}{\linenomathNonumbers\oldequation}{\oldendequation\endlinenomath}
\expandafter\let\expandafter\oldequationstar\csname equation*\endcsname
\expandafter\let\expandafter\oldendequationstar\csname endequation*\endcsname
\renewenvironment{equation*}{\linenomathNonumbers\oldequationstar}{\oldendequationstar\endlinenomath}
\let\oldalign\align
\let\oldendalign\endalign

\expandafter\let\expandafter\oldalignstar\csname align*\endcsname
\expandafter\let\expandafter\oldendalignstar\csname endalign*\endcsname
\renewenvironment{align*}{\linenomathNonumbers\oldalignstar}{\oldendalignstar\endlinenomath}

%%% THEOREM STYLES --------------------------------------------------
\makeatletter
\makeatother

\newcounter{intro}
\newcounter{HypCounter}

\newtheorem{introthm}[intro]{Theorem}

\newtheorem{introcor}[intro]{Corollary}

\theoremstyle{plain}
\newtheorem{theorem}{Theorem}[section]
\newtheorem{lemma}[theorem]{Lemma}
\newtheorem{corollary}[theorem]{Corollary}

\theoremstyle{definition}

\newtheorem*{hypothesis*}{Hypothesis}

\newtheorem*{ack}{Acknowledgements}

\setcounter{tocdepth}{2}
\setcounter{secnumdepth}{2}
\numberwithin{equation}{section}
\numberwithin{theorem}{section}

%%% DOCUMENT INFORMATION --------------------------------------------------
\title%[Simple criteria, higher rational singularities]
{Simple criteria for higher rational singularities}

\author[S.~J.~Kov\'{a}cs]{S\'{a}ndor J Kov\'{a}cs}
\address{SJK: %S.~J.~Kov\'{a}cs,
Department of Mathematics,
University of Washington, 
Seattle, WA 98195,
U.S.A.}
\email{skovacs@uw.edu}

\author[P.~Lank]{Pat Lank}
\address{PL: %P.~Lank,
Dipartimento di Matematica “F. Enriques”, Universit\`{a} degli Studi di Milano, Via Cesare
Saldini 50, 20133 Milano, Italy}
\email{plankmathematics@gmail.com}

\author[S.~Venkatesh]{Sridhar Venkatesh}
\address{SV: %S.~Venkatesh,
Department of Mathematics,
University of Michigan, 
Ann Arbor, MI 48109,
U.S.A.}
\email{srivenk@umich.edu}

\date{\today}

\keywords{Higher singularities, derived categories, splitting criteria, rational singularities, Du~Bois singularities}

\subjclass[2020]{14B05 (primary), 32S35, 14A30, 14E15, 13D09}

%% 14A30 Fundamental constructions in algebraic geometry involving higher and derived categories (homotopical algebraic geometry, derived algebraic geometry, etc.)

%% 13D09 Derived categories and commutative rings

%% 14B05 Singularities in algebraic geometry

%% 14E15 Global theory and resolution of singularities (algebro-geometric aspects) 

%% 32S35 Mixed Hodge theory of singular varieties (complex-analytic aspects)

\begin{document}
    
\begin{abstract}
  This work establishes simple criteria for detecting higher rational singularities
  via the intersection Du~Bois complex and the irrationality complex of a normal
  variety over the complex numbers.
  % Moreover, we show the descent of pre-$m$-rationality under finite covers of
  % normal varieties and study their behavior under base change of field extensions.
\end{abstract}

\maketitle

%\tableofcontents

%%%%%%%%%%%%%%%%%%%%%%%%%%%%%%%%%%%%%
\section{Introduction}
\label{sec:intro}
%%%%%%%%%%%%%%%%%%%%%%%%%%%%%%%%%%%%%
% There is no question that
\noindent
Derived categories have, and continue to play an active role in birational geometry.
They provide a useful tool to study singularities, in particular rational
\cite{Artin:1966, Lipman:1969} and Du~Bois singularities \cite{DuBois:1981,
  Steenbrink:1981, Steenbrink:1983}. The fact that Kawamata log terminal
singularities are rational \cite{Elkik81} has had an enormous effect in many
applications. % \cite{Kollar/Mori:2008},
Log canonical singularities, which are extremely important in moduli theory are not
rational, but they are Du~Bois \cite{Kollar/Kovacs:2010}. This latter class should be
thought of as a generalization of rational singularities cf.~\cite{Kovacs:1999}.

The theory of mixed Hodge modules, developed by Saito \cite{Saito:1989, Saito:1990},
is a vast generalization of the classical theory of (mixed) Hodge structures. There
are deep applications in singularity theory, algebraic geometry, and mirror symmetry
(e.g.\ \cite{Katzarkov/Kontsevich/Pantev:2008}). Lately, ideas from mixed Hodge
theory has led to \textit{higher analogs} of rational and Du~Bois
singularities. Initial work focused on local complete intersections to explore higher
versions of these singularities \cite{Mustata/Olano/Popa/Witaszek:2023,
  Jung/Kim/Saito/Yoon:2022, Mustata/Popa:2022, Friedman/Laza:2024,
  Friedman/Laza:2024b, Mustata/Popa:2025}. Subsequently, %a lot of
the investigation has been extended to arbitrary varieties
\cite{Shen/Venkatesh/Vo:2023, Tighe:2024, Popa/Shen/DucVo:2024, Park/Popa:2024,
  Dirks/Olano/Raychaudhury:2025, Kovacs:2025}.

These new classes of singularities are defined by vanishing conditions on cohomology
sheaves of specific graded components of bounded complexes derived from mixed Hodge
modules.  In addition to the Deligne-Du~Bois complex $\underline{\Omega}^\kdot_X$,
key examples include the intersection Du~Bois complex
$\mathcal{I} \underline{\Omega}^\kdot_X$ and the irrationality complex $\Irr^\kdot_X$
\cite{Popa/Shen/DucVo:2024,Kovacs:2025}. Their associated graded complexes,
$\underline{\Omega}^p_X, \mathcal{I}\underline{\Omega}^p_X, \Irr^p_X$ lie in the
bounded derived category of coherent sheaves, $D^b_{\operatorname{coh}}(X)$, for
complex varieties. See \Cref{sec:prelim_singularities} for details.

An important aspect of extending these notions to non-lci singularities is that the
original definition in the lci case is too restrictive in general
cf.~\cite{Shen/Venkatesh/Vo:2023, Tighe:2024}. This led to several variants following
two main motivating principles.

The more obvious principle is to make the definition less restrictive in general
while maintaining the class of singularities it defines in the lci case. This is
achieved by the notions of \emph{$m$-rational} and \emph{$m$-Du~Bois singularities},
which we will review in a moment.

Another motivating principle is to identify and generalize the essential property of
the existing definitions.
Several arguments, already in the `classic' rational and Du~Bois case, only require
the vanishing of higher cohomologies. Arguably this is the most important property of
these singularties and their higher analogs. This leads to the notions of
\emph{pre-$m$-rational} and \emph{pre-$m$-Du~Bois singularities}. Their definition is
essentially that the relevant complexes have a single non-zero cohomology
sheaves. For the precise definition see \Cref{sec:prelim_singularities}.

After this the definition of \emph{$m$-rational} and \emph{$m$-Du~Bois singularities}
in the non-lci case should be based on the definitions of \emph{pre-$m$-rational} and
\emph{pre-$m$-Du~Bois singularities} with some additional assumptions about that
single non-zero cohomology sheaf that the above definition dictates. Unfortunately,
the lci case is even more special and in order to design a general definition that
agrees with the original definition in the lci case one needs to add a codimension
restriction on the singular locus. This might feel somewhat unnatural, or one may
argue that it is a sort of (``super'')-normality condition. In this article we will
not use these notions, so for the precise definitions the reader is referred to
\cite[Defs.~1.2,1.3]{Shen/Venkatesh/Vo:2023}.

All of these notions are relatively new, but it already seems to becoming clear that
the really interesting notions are the {pre-$m$-rational} and {pre-$m$-Du~Bois
  singularities}. Even if one is primarily interested in the $m$-rational and
$m$-Du~Bois cases, the difficult part is likely to deal with the ``pre'' case and the
additional conditions should be relatively easy to handle. Accordingly, in this
article we concentrate on the ``pre'' case.

An important aspect of better understanding a class of singularities is to find
useful criteria that identify these classes.
This has been done for instance in the cases of rational %singularities
\cite{Kovacs:2000,Bhatt:2012,Murayama:2024,Lank:2025} and Du~Bois singularities
\cite{Schwede:2007,KS13}.

As a next step, it is natural to aim to find similar criteria for the singularities
we have been discussing.  Recently \cite{Kovacs:2025} established detection criteria
for pre-$m$-Du~Bois singularities. Here we show analogous results for
pre-$m$-rational singularities. %, which completes the picture for these classes.

Our main result is the following.
\begin{introthm}
  [= \Cref{thm:splitting_via_irr_complex}]
  \label{introthm:kovacs_splitting_pre_k_rational}
  Let $X$ be a normal variety over $\mathbb{C}$. Then the following are equivalent
  for any $m\geq 0$:
  \begin{enumerate}
  \item $X$ has pre-$m$-rational singularities,
  \item
    $\mathcal{H}^0(\mathcal{I}\underline{\Omega}^p_X) \xrightarrow{ntrl.}
    \mathcal{I}\underline{\Omega}^p_X$ has a left inverse for each $0\leq p \leq m$,
    and
  \item $\widetilde{\Irr}^p_X \xrightarrow{ntrl.} \Irr^p_X$ has a left inverse for
    each $0\leq p \leq m$.
  \end{enumerate}
\end{introthm}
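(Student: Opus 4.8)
The three conditions are all phrased ``for each $0\le p\le m$'', so the plan is to fix $p$ in that range and prove the equivalence of the three pointwise statements, which I will call $(1_p),(2_p),(3_p)$. The implications $(1_p)\Rightarrow(2_p)$ and $(1_p)\Rightarrow(3_p)$ are the formal half. By definition (see \Cref{sec:prelim_singularities}) pre-$m$-rationality at level $p$ says precisely that $\mathcal{I}\underline{\Omega}^p_X$ has a single nonzero cohomology sheaf; since $\mathcal{I}\underline{\Omega}^p_X\in D^{\geq 0}_{\operatorname{coh}}(X)$ that sheaf sits in degree $0$, so the canonical map $\mathcal{H}^0(\mathcal{I}\underline{\Omega}^p_X)\xrightarrow{ntrl.}\mathcal{I}\underline{\Omega}^p_X$ is an isomorphism and a fortiori admits a left inverse. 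For $(1_p)\Rightarrow(3_p)$ one additionally uses the fact — available in this setting — that pre-$m$-rational $\Rightarrow$ pre-$m$-Du~Bois, so that $\underline{\Omega}^p_X$ is a sheaf as well; feeding this into the distinguished triangle defining $\Irr^p_X$ shows that $\Irr^p_X$ collapses onto $\widetilde{\Irr}^p_X$. Thus the genuine content of the theorem lies in the reverse implications: a \emph{one-sided} inverse already forces the relevant complex to be concentrated in a single degree. This is the ``Kov\'acs splitting'' principle, carried out for the Deligne--Du~Bois complex $\underline{\Omega}^\kdot_X$ (i.e.\ for pre-$m$-Du~Bois singularities) in \cite{Kovacs:2025}; the task here is to run it for $\mathcal{I}\underline{\Omega}^\kdot_X$ and then transport the conclusion to $\Irr^\kdot_X$.

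For $(2_p)\Rightarrow(1_p)$: a left inverse of $\mathcal{H}^0(\mathcal{I}\underline{\Omega}^p_X)\xrightarrow{ntrl.}\mathcal{I}\underline{\Omega}^p_X$ makes the truncation triangle split, exhibiting $C:=\tau_{\ge 1}\mathcal{I}\underline{\Omega}^p_X$ as a direct summand of $\mathcal{I}\underline{\Omega}^p_X$, and one must show $C=0$. I plan to do this as in the Du~Bois case, using (i) Grothendieck--Serre duality together with the self-duality of the intersection Du~Bois complex, $\mathbb{D}_X(\mathcal{I}\underline{\Omega}^p_X)\simeq\mathcal{I}\underline{\Omega}^{\dim X-p}_X[\dim X]$, and (ii) the vanishing and amplitude bounds for the complexes $\mathcal{I}\underline{\Omega}^q_X$ that are available because $X$ is normal (the higher cohomology sheaves being supported on the codimension $\ge 2$ singular locus). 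Dualizing the splitting turns $C$ into a direct summand of $\mathbb{D}_X(\mathcal{I}\underline{\Omega}^p_X)$, i.e.\ of a shift of $\mathcal{I}\underline{\Omega}^{\dim X-p}_X$, while the truncation $C\in D^{\ge 1}$ pins $\mathbb{D}_X(C)$ into a complementary range; matching these against the a~priori amplitude of $\mathcal{I}\underline{\Omega}^{\dim X-p}_X$ confines $\mathbb{D}_X(C)$ to a range where it must vanish, whence $C=0$ and $(1_p)$ holds. An equivalent and perhaps more transparent route, mirroring Kov\'acs's treatment of Du~Bois singularities, is to first reduce to $X$ projective (pre-$m$-rationality is local, so one compactifies an affine neighbourhood) and then invoke the $E_1$-degeneration of the Hodge-to-de~Rham-type spectral sequence attached to $IC^H_X$: the resulting surjection $\operatorname{gr}^p_F IH^{p+q}(X,\mathbb{C})\twoheadrightarrow\mathbb{H}^q(X,\mathcal{I}\underline{\Omega}^p_X)$, compared with the analogous data on a resolution via the decomposition theorem, forces the summand $\mathbb{H}^\bullet(X,C)$ to vanish and hence, by a Serre-vanishing/positivity argument, $C=0$.

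Finally, for the equivalence with $(3_p)$ I would work with the triangle $\underline{\Omega}^p_X\to\mathcal{I}\underline{\Omega}^p_X\to\Irr^p_X\xrightarrow{+1}$ and its ``$\mathcal{H}^0$-level'' companion (whose cone is $\widetilde{\Irr}^p_X$), applying the octahedral axiom to the two factorizations of $\mathcal{H}^0(\underline{\Omega}^p_X)\to\mathcal{I}\underline{\Omega}^p_X$. This identifies $\operatorname{cone}(\widetilde{\Irr}^p_X\to\Irr^p_X)$ with an object built out of $\tau_{\ge 1}\mathcal{I}\underline{\Omega}^p_X$ and $\tau_{\ge 1}\underline{\Omega}^p_X$; using the criterion for pre-$m$-Du~Bois singularities from \cite{Kovacs:2025} to dispose of the $\underline{\Omega}^p_X$-contribution, a left inverse for $\widetilde{\Irr}^p_X\xrightarrow{ntrl.}\Irr^p_X$ exists if and only if one exists for $\mathcal{H}^0(\mathcal{I}\underline{\Omega}^p_X)\xrightarrow{ntrl.}\mathcal{I}\underline{\Omega}^p_X$, which closes the loop. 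I expect the real obstacle to be step (ii): isolating exactly the combination of self-duality and vanishing for $\mathcal{I}\underline{\Omega}^\kdot_X$ on an arbitrary normal (not lci, with no codimension hypothesis on the singular locus) variety that makes the cohomological bookkeeping strict rather than merely consistent — this is precisely where normality, and not just reducedness, must be used.
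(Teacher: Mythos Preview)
Two definitional misreadings derail the argument. First, pre-$m$-rationality (\Cref{sec:prelim_singularities}) is the condition $\mathcal{H}^j(\Irr^p_X)=0$ for $j>0$ and $0\le p\le m$, \emph{not} the analogous condition on $\mathcal{I}\underline{\Omega}^p_X$; so $(1)\Rightarrow(3)$ is the tautology and $(1)\Rightarrow(2)$ is the nontrivial step --- you have them reversed, and the latter is exactly \Cref{lem:pre_m_rational_via_intersection_db_isomorphism}, whose proof is not formal (it needs \cite[Lemma~9.4]{Popa/Shen/DucVo:2024} and the implication pre-$m$-rational $\Rightarrow$ pre-$m$-Du~Bois). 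Second, and more seriously, the display $\underline{\Omega}^p_X\to\mathcal{I}\underline{\Omega}^p_X\to\Irr^p_X$ in \eqref{eq:1} is a \emph{composition} of maps, not a distinguished triangle: by definition $\Irr^p_X=\sR\sHom(\underline{\Omega}^{\dim X-p}_X,\omega^\kdot_X)[-\dim X]$ is the Grothendieck dual of a Du~Bois complex, not a cone. Your octahedral strategy for $(2)\Leftrightarrow(3)$ therefore rests on a triangle that does not exist. The paper instead links $(3)$ to $(2)$ via the observation (\Cref{lem:Kebekus_Schnell}) that once $X$ has rational singularities, $\mathcal{H}^0(\mathcal{I}\underline{\Omega}^p_X)\simeq\widetilde{\Irr}^p_X\simeq\Omega^{[p]}_X$, so a left inverse of $\widetilde{\Irr}^p_X\to\Irr^p_X$ transports across the commuting square to a left inverse of $\mathcal{H}^0(\mathcal{I}\underline{\Omega}^p_X)\to\mathcal{I}\underline{\Omega}^p_X$.

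Your pointwise-in-$p$ plan is also unsafe: both \Cref{lem:pre_m_rational_via_intersection_db_isomorphism} and the paper's proof of $(2)\Rightarrow(1)$ run by induction on $m$, the base case supplying rational singularities (which is what makes \Cref{lem:Kebekus_Schnell} available later). For the core of $(2)\Rightarrow(1)$, your duality instinct is correct, but the missing ingredient you flag as ``step~(ii)'' is neither an amplitude bound nor $E_1$-degeneration --- it is an \emph{injectivity} theorem. The paper applies $\sR\sHom(-,\omega^\kdot_X)$ to the split map so that the dual is surjective on cohomology sheaves, then invokes \cite[Conj.~10.1, Thm.~10.3]{Popa/Shen/DucVo:2024} together with \cite[Thm.~1.1]{Kovacs:2025} to see that this same dual map is already injective on cohomology; hence it is an isomorphism, and dualizing back plus \Cref{lem:pre_m_rational_via_intersection_db_isomorphism} finishes. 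Without that injectivity input, the bookkeeping you describe cannot be made strict on an arbitrary normal variety.
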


\noindent
% On one hand,
\Cref{introthm:kovacs_splitting_pre_k_rational} is a genuine strengthening of
\cite{Kovacs:2000,Bhatt:2012} to the realm of higher rational singularities.
%On the other hand, similar to outcomes of loc.\ cit., we expect
It is reasonable to expect that this criterion will be similarly useful.
%useful for finding new examples in the future. % of varieties with related singularities.
To demonstrate the power of this criterion we consider finite morphisms of normal
varieties.

\begin{introcor}
  [= \Cref{cor:finite_cover_pre_m_rational}]
  \label{introcor:finite_cover_pre_m_rational}
  Let $f\colon Y \to X$ be a finite surjective morphism of normal varieties. If $Y$
  has pre-$m$-rational singularities, then so does $X$.
\end{introcor}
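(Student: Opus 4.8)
The plan is to reduce to \Cref{thm:splitting_via_irr_complex} and then run a trace argument. First I would fix $0\le p\le m$, set $d\colonequals\deg f$ — a positive integer, hence invertible over $\mathbb{C}$ — and observe that, since $X$ and $Y$ are both normal, \Cref{thm:splitting_via_irr_complex} applies to each. By hypothesis, the criterion for $Y$ in terms of $\mathcal{H}^0(\mathcal{I}\underline{\Omega}^p_Y)\xrightarrow{ntrl.}\mathcal{I}\underline{\Omega}^p_Y$ provides a left inverse $r_Y$ of the natural map $\iota_Y\colon\mathcal{H}^0(\mathcal{I}\underline{\Omega}^p_Y)\to\mathcal{I}\underline{\Omega}^p_Y$. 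It then suffices to produce a left inverse of $\iota_X\colon\mathcal{H}^0(\mathcal{I}\underline{\Omega}^p_X)\to\mathcal{I}\underline{\Omega}^p_X$, since the same criterion for $X$ yields the conclusion.

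The key input — and the only step that is not purely formal — is the functoriality of the intersection Du~Bois complex along the finite surjective morphism $f$, together with a trace. Since $f$ is finite, $Rf_{*}=f_{*}$ is exact and commutes with $\mathcal{H}^0$. I would use natural morphisms
\[
\beta\colon \mathcal{I}\underline{\Omega}^p_X\longrightarrow Rf_{*}\mathcal{I}\underline{\Omega}^p_Y,\qquad
t\colon Rf_{*}\mathcal{I}\underline{\Omega}^p_Y\longrightarrow\mathcal{I}\underline{\Omega}^p_X,\qquad
t\circ\beta=d\cdot\mathrm{id}
\]
in $D^b_{\operatorname{coh}}(X)$. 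Writing $\alpha\colonequals\mathcal{H}^0(\beta)$ and $t'\colonequals\mathcal{H}^0(t)$ one gets $t'\circ\alpha=d\cdot\mathrm{id}$, and naturality of the truncation map $\iota$ gives the compatibility $\beta\circ\iota_X=Rf_{*}(\iota_Y)\circ\alpha$. This package should be available from the functoriality discussion of \Cref{sec:prelim_singularities}; at bottom it reflects the fact that the pure Hodge module $\mathrm{IC}^H_X$ is a direct summand of $f_{*}\mathrm{IC}^H_Y$ compatibly with Hodge filtrations — with complementary summand supported on the branch locus, and the trace restricting to multiplication by $d$ on the open set where $f$ is étale — after applying the functor that extracts $\mathcal{I}\underline{\Omega}^\bullet$.

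Granting that, I would set $r_X\colonequals\tfrac{1}{d}\,t'\circ Rf_{*}(r_Y)\circ\beta\colon\mathcal{I}\underline{\Omega}^p_X\to\mathcal{H}^0(\mathcal{I}\underline{\Omega}^p_X)$ and compute
\[
r_X\circ\iota_X
=\tfrac{1}{d}\,t'\circ Rf_{*}(r_Y)\circ Rf_{*}(\iota_Y)\circ\alpha
=\tfrac{1}{d}\,t'\circ Rf_{*}(r_Y\circ\iota_Y)\circ\alpha
=\tfrac{1}{d}\,t'\circ\alpha
=\mathrm{id},
\]
using in turn the compatibility square, functoriality of $Rf_{*}$, the identity $r_Y\circ\iota_Y=\mathrm{id}$, and $t'\circ\alpha=d\cdot\mathrm{id}$. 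Thus $\iota_X$ admits a left inverse for every $0\le p\le m$, and \Cref{thm:splitting_via_irr_complex} shows that $X$ has pre-$m$-rational singularities.

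The hard part is the functoriality–trace package of the second paragraph: one needs that $\mathcal{I}\underline{\Omega}^\bullet$ admits natural pullback and trace maps along finite surjective morphisms of normal varieties whose composite is multiplication by the degree, compatibly with the canonical map out of $\mathcal{H}^0$. If this is not taken as given in \Cref{sec:prelim_singularities}, it must be read off from the behavior of the intersection Hodge module under finite pushforward, pinning down the scalar on the étale locus. (The identical computation also works with $\widetilde{\Irr}^\bullet\xrightarrow{ntrl.}\Irr^\bullet$ in place of $\mathcal{H}^0(\mathcal{I}\underline{\Omega}^\bullet)\to\mathcal{I}\underline{\Omega}^\bullet$, using the third criterion of \Cref{thm:splitting_via_irr_complex} instead.)
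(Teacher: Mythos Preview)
Your approach is correct but takes a genuinely different route from the paper's. Both arguments finish by invoking \Cref{thm:splitting_via_irr_complex}, and both must produce a left inverse of $\iota_X\colon\mathcal{H}^0(\mathcal{I}\underline{\Omega}^p_X)\to\mathcal{I}\underline{\Omega}^p_X$. You obtain it from a full trace package $t\circ\beta=d\cdot\mathrm{id}$ on $\mathcal{I}\underline{\Omega}^p$; this is legitimate---it follows from the semisimplicity of the pure Hodge module $f_\ast\mathcal{IC}^H_Y$ and the simplicity of $\mathcal{IC}^H_X$, so the forward map the paper constructs is a summand inclusion---but, as you yourself flag, it is not supplied in \Cref{sec:prelim_singularities} and needs its own justification. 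The paper avoids this: it builds only the \emph{forward} map $\mathcal{I}\phi^p\colon\mathcal{I}\underline{\Omega}^p_X\to f_\ast\mathcal{I}\underline{\Omega}^p_Y$ (via $\operatorname{gr}^W$ and $\operatorname{gr}^F\operatorname{DR}$ applied to $\mathbb{Q}^H_X[\dim X]\to f_\ast\mathbb{Q}^H_Y[\dim Y]$), then uses the hypothesis on $Y$ together with \cite[Proposition~4.2(1)]{Shen/Venkatesh/Vo:2023}, \Cref{lem:Kebekus_Schnell}, and \Cref{lem:pre_m_rational_via_intersection_db_isomorphism} to collapse the entire square to sheaves identified with reflexive differentials; the splitting then comes from Zannier's trace on $\Omega^{[p]}_X\to f_\ast\Omega^{[p]}_Y$ \cite{Zannier:1999}, and the left inverse of $\psi^p_X$ is read off from the square. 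Your route is cleaner and never touches $\underline{\Omega}^p$ or the pre-$m$-Du~Bois property of $X$, but it imports a splitting on intersection complexes that the paper deliberately sidesteps; the paper's route is more self-contained given its setup, at the cost of the detour through reflexive differentials.
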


\noindent
The result above is analogous to the one known for rational singularities.
% (e.g.\ via the trace operator of $\sR f_\ast$).
% However, while for rational singularities,
In that case it is a direct consequence of (the relevant analog of)
\Cref{introthm:kovacs_splitting_pre_k_rational} due to the fact that the trace map
splits the natural map $\mathcal O_X\to f_*\mathcal O_Y$. The proof is less
straightforward in the higher rational case.

% In fact, \Cref{cor:finite_cover_pre_m_rational} completes the analogy
% for such results in the case of higher rational singularities. Previously,

Dirks, Olano, and Raychaudhury proved a similar result for finite group quotients
\cite[Corollary 6.6]{Dirks/Olano/Raychaudhury:2025} albeit with a markedly different
proof. For higher Du~Bois singularities, Shen, Venkatesh, and Vo \cite[Proposition
4.2]{Shen/Venkatesh/Vo:2023} established a variant of \cite[Corollary
2.5]{Kovacs:1999}. After finishing this article we learned that Hyunsuk Kim obtained
the same result as in \Cref{cor:finite_cover_pre_m_rational} using a trace morphism
on the Du Bois complex \cite{Kim:2025}, and that Duc Vo also obtained similar results
to some that appear here in his thesis \cite{DucVo:2025}.

\begin{ack}
  S\'andor Kov\'acs was supported in part by NSF Grant DMS-2100389.  Pat Lank was
  supported under the ERC Advanced Grant 101095900-TriCatApp. The authors would like
  to thank J\'anos Koll\'ar, Mircea Musta\c{t}\u{a}, Wanchun Shen, Duc Vo for helpful
  discussions and useful comments and suggestions.
\end{ack}

%%%%%%%%%%%%%%%%%%%%%%%%%%%%%%%%%%%%%
\section{Preliminaries}
\label{sec:prelim}
%%%%%%%%%%%%%%%%%%%%%%%%%%%%%%%%%%%%%

We briefly cover the notions of singularities and their constructions related to our
work. Let $X$ be a variety over $\mathbb{C}$ (or an algebraically closed field of
characteristic zero), i.e., a separated integral scheme of finite type over the base
field. %
Recall that a proper birational morphism
%\Sridhar{Should we say birational morphism instead of modification?} 
$f\colon \widetilde{X} \to X$ from a smooth variety is said to
be a \emph{strong log resolution} with exceptional divisor $E$ if it is an
isomorphism away from the singular locus $X_{\operatorname{sing}}$, and
$E := f^{-1}(X_{\operatorname{sing}})$ with its reduced structure is a simple normal
crossing divisor.

%%%%%%%%%%%%%%%%%%%%%%%%%%%%%%%%%%%%%
\subsection{Deligne-Du~Bois complexes}
\label{sec:prelim_DB}
%%%%%%%%%%%%%%%%%%%%%%%%%%%%%%%%%%%%%
Extending Deligne's ideas, Du~Bois \cite{DuBois:1981} constructed the so called
\emph{Deligne-Du~Bois complex}, which behaves similarly to the de~Rham complex in the
smooth case. In fact, it is isomorphic to the de~Rham complex on smooth varieties.
We will briefly review the necessary background, but the reader is encouraged to
peruse \cite{DuBois:1981}, \cite{Guillen/NavarroAznar/Pascual-Gainza/Puerta:1988},
and \cite[\S 7.3]{Peters/Steenbrink:2008} for more details.
% To fast track the burden of technicalities, we take holistic approach.

Let $\epsilon \colon X_\kdot \to X$ be a cubical hyperresolution of $X$. The
\emph{$p$-th \mbox{(graded Deligne-)}Du~Bois complex} of $X$, denoted by
$\underline{\Omega}^p_X$, is the $p$-th graded component of the filtered
Deligne-Du~Bois complex, $\underline{\Omega}^\kdot_X$.  It was shown in
\cite[4.4]{DuBois:1981} that $\underline{\Omega}^p_X\in D^b_{\operatorname{coh}}(X)$.

The object $\underline{\Omega}^\kdot_X$ can be represented (i.e., up to isomorphism
in the derived category) by the associated total complex of a double complex
$\mathcal{D}^{\bullet , \bullet}$ whose components are of the form
$(\epsilon_s)_\ast \mathcal{A}^{t,s}_{X_s}$
where $\mathcal{A}^{p,q}_{X_i}$ are the sheaves of smooth $(p,q)$-forms on $X_i$
(appearing in $X_{\kdot}$) cf.~\cite[2.4]{Steenbrink85}.  The rows for
$\mathcal{D}^{\bullet , \bullet}$ represent the objects
$\sR (\epsilon_i)_\ast \Omega^p_{X_i}\in D^b_{\operatorname{coh}}(X)$.
%By \cite[Lemma 2.1]{Shen/Venkatesh/Vo:2023}, we see that 
%\begin{displaymath}
%    \underline{\Omega}^p_X \simeq \operatorname{cone}( \cdots (\operatorname{cone}(\sR (\epsilon_0)_\ast \Omega^p_{X_0}, \sR (\epsilon_1)_\ast \Omega^p_{X_1}), \sR (\epsilon_2)_\ast \Omega^p_{X_2}), \cdots, \sR (\epsilon_n)_\ast \Omega^p_{X_n})[-n].
%\end{displaymath}
%That is, $\underline{\Omega}^p_X$ is obtained from taking successive cones of morphisms $\sR (\epsilon_i)_\ast \Omega^p_{X_i} \to \sR (\epsilon_{i+1})_\ast \Omega^p_{X_{i+1}}$ arising from $\mathcal{D}^{\bullet , \bullet}$. So, it follows that $\underline{\Omega}^p_X\in D^b_{\operatorname{coh}}(X)$.
By \cite[Lemma 2.1]{Shen/Venkatesh/Vo:2023}, $\underline{\Omega}^p_X$ is obtained
from taking successive cones of morphisms
$\sR (\epsilon_i)_\ast \Omega^p_{X_i} \to \sR (\epsilon_{i+1})_\ast
\Omega^p_{X_{i+1}}$ arising from $\mathcal{D}^{\bullet , \bullet}$. This provides an
alternative proof of the fact that
$\underline{\Omega}^p_X\in D^b_{\operatorname{coh}}(X)$.

%%%%%%%%%%%%%%%%%%%%%%%%%%%%%%%%%%%%%
\subsection{Mixed Hodge modules}
\label{sec:prelim_MHM}
%%%%%%%%%%%%%%%%%%%%%%%%%%%%%%%%%%%%%
% Another key player involves
Mixed Hodge modules
% , but we will presents things in a streamlined fashion. These
were introduced in \cite{Saito:1989,Saito:1990}. See \cite[\S
14]{Peters/Steenbrink:2008} for a modern treatment. Denote the derived category of
bounded complexes of mixed Hodge modules on $X$ by $D^b(\operatorname{MHM}(X))$.
This category admits a \emph{duality functor}. Specifically, by abuse of notation, it
may be identified with the duality functor $\mathbb{D}_X(-)$ on the derived category
of filtered $D$-modules underlying mixed Hodge modules \cite[\S
2.4]{Saito:1988}. There are the \emph{($p$-th) graded de Rham functors} (which are
exact functors):
%\sandor{This seems to be missing a word or two}
\begin{displaymath}
  \operatorname{gr}^F_p \operatorname{DR}_X \colon D^b(\operatorname{MHM}(X)) \to
  D^b_{\operatorname{coh}}(X). 
\end{displaymath}
The duality functor $\mathbb{D}_X(-)$ commutes with $\operatorname{gr}^F_p\operatorname{DR}_X$; in
particular (see \cite[\S 2.4]{Saito:1988}),
\begin{displaymath}
  \sR\sHom  ( \operatorname{gr}^F_p
  \operatorname{DR}_X (E) , \omega^\kdot_X)[- \dim X] \simeq \operatorname{gr}^F_{-p}
  \operatorname{DR}_X  \mathbb{D}_X(E)[- \dim X]. 
\end{displaymath}
%
% The graded de Rham functors enjoy the following compatibility with proper morphisms
% $f \colon Y \to X$; specifically, there is a commutative diagram,
%\begin{displaymath}
%  https://q.uiver.app/#q=WzAsNCxbMCwwLCJEXmIoXFxvcGVyYXRvcm5hbWV7TUhNfShZKSkiXSxbMSwwLCJEXmJfe1xcb3BlcmF0b3JuYW1le2NvaH19KFkpIl0sWzEsMSwiRF5iX3tcXG9wZXJhdG9ybmFtZXtjb2h9fShYKSJdLFswLDEsIkReYihcXG9wZXJhdG9ybmFtZXtNSE19KFgpKSJdLFswLDEsIlxcb3BlcmF0b3JuYW1le2dyfV5GX3AgXFxvcGVyYXRvcm5hbWV7RFJ9X1kiLDJdLFsxLDIsIlxcbWF0aGNhbHtSfWZfXFxhc3QiXSxbMywyLCJcXG9wZXJhdG9ybmFtZXtncn1eRl9wIFxcb3BlcmF0b3JuYW1le0RSfV9YIiwyXSxbMCwzLCJmXntcXG9wZXJhdG9ybmFtZXtNSE19fV9cXGFzdCIsMl1d
%    \begin{tikzcd}
%        {D^b(\operatorname{MHM}(Y))} & {D^b_{\operatorname{coh}}(Y)} \\
%        {D^b(\operatorname{MHM}(X))} & {D^b_{\operatorname{coh}}(X)}
%        \arrow["{\operatorname{gr}^F_p \operatorname{DR}_Y}"', from=1-1, to=1-2]
%        \arrow["{f^{\operatorname{MHM}}_\ast}"', from=1-1, to=2-1]
%        \arrow["{\sR f_\ast}", from=1-2, to=2-2]
%        \arrow["{\operatorname{gr}^F_p \operatorname{DR}_X}"', from=2-1, to=2-2]
%    \end{tikzcd}
%  \end{displaymath}
There are two objects in $D^b(\operatorname{MHM}(X))$ central to the definitions of
singularities we study, namely, $\mathbb{Q}^H_X [\dim X]$ and the intersection
complex $\mathcal{IC}_X^H$, see \cite[\S 4.5]{Saito:1990}. We remark that our notation is different from loc.\ cit.\ which uses the notation $\mathcal{IC}_X \mathbb{Q}^H$. There is a
composition
\begin{equation}
  \label{eq:intersection_composition}
  \gamma_X \colon \mathbb{Q}^H_X [\dim X] \to \mathcal{H}^0 (\mathbb{Q}^H_X [\dim X])
  \to \mathcal{IC}_X^H \simeq \operatorname{gr}^W_{\dim X} (\mathcal{H}^0
  (\mathbb{Q}^H_X [\dim X])) 
\end{equation}
where $\operatorname{gr}^W$ refers to a graded piece with respect to the weight filtration (see loc.\ cit.). By
\cite[Theorem 4.2]{Saito:2000}, there is an isomorphism
\begin{displaymath}
  \underline{\Omega}^p_X \simeq \operatorname{gr}^F_{-p} \operatorname{DR}_X
  (\mathbb{Q}^H_X [\dim X])[p- \dim X] \simeq \operatorname{gr}^F_{-p}
  \operatorname{DR}_X (\mathbb{Q}^H_X [p]). 
\end{displaymath}
This leads to the \emph{intersection Du~Bois complexes} (see \cite[\S
9]{Popa/Shen/DucVo:2024}),
\begin{displaymath}
  \mathcal{I} \underline{\Omega}^p_X := \operatorname{gr}^F_{-p} \operatorname{DR}_X
  (\mathcal{IC}_X^H [p- \dim X]). 
\end{displaymath}
Taking the composition of $\gamma_X$ with its dual gives the following morphisms:
%in $D^b(\operatorname{MHM}(X))$,
\begin{displaymath}
  \mathbb{Q}^H_X [\dim X] \to  \mathcal{IC}_X^H \to (\mathbb{D}_X
  (\mathbb{Q}^H_X [\dim X]))(-\dim X),
\end{displaymath}
where we use the self-duality relation
$\mathbb{D}_X (\mathcal{IC}_X^H)\simeq \mathcal{IC}_X^H (\dim X)$. Here, $(t)$ is the
$t$-th Tate twist of $M$ (at the level of filtered $D$-modules this is simply a shift
down by $t$, that is, $F_{\kdot} (M)(t) = F_{\kdot-t} M$), cf.~\cite[Eq.\
4.5.13]{Saito:1990}. Finally, applying the functors
$\operatorname{gr}^F_{-p} \operatorname{DR}_X$ give us the following morphisms in
$D^b_{\operatorname{coh}}(X)$:
\begin{equation}
  \label{eq:intro_mhm_composition}
  \underline{\Omega}^p_X \to \mathcal{I} \Omega^p_X \to \sR\sHom (
  \underline{\Omega}^{\dim X-p}_X ,   \omega^\kdot_X)[-\dim X]. 
\end{equation}
To simplify notation, we follow \cite[Eq.\ (3.K.1)]{Kovacs:2025} and use
\begin{equation}
  \label{eq:intro_irrationality_complex_pth}
  \Irr^p_X\colon=\sR\sHom( \underline{\Omega}^{\dim X-p}_X ,
  \omega^\kdot_X)[-\dim X]
\end{equation}
as well as, $\widetilde{\Irr}^p_X := \mathcal{H}^0 (\Irr^p_X)$. Here, $\Irr^p_X$ is
the shifted associated graded quotient of the \emph{irrationality complex of $X$},
which is denoted by $\Irr^\kdot_X$ (see \cite[\S 3.K]{Kovacs:2025}). Up to
isomorphism of filtered complexes on $X$, we have
\begin{displaymath}
  \Irr^{\kdot}_X \simeq \sR\sHom(\sR  \epsilon_\ast
  \underline{\Omega}^\kdot_{X_\kdot}, \omega^\kdot_X ) [-2\dim X]. 
\end{displaymath}
Note that the $2\dim X$ shift is necessary, because of taking the dual and the
inherent shift in $\omega^\kdot_x$. For a longer explanation see
\cite[Remark~3.12]{Kovacs:2025}.
% \See \cite[V.3.6(5)]{Guillen/NavarroAznar/Pascual-Gainza/Puerta:1988} where
% $\epsilon\colon X_{\kdot}\to X$ is the cubical hyperresolution in \Cref{sec:prelim_DB}.

%%%%%%%%%%%%%%%%%%%%%%%%%%%%%%%%%%%%%
\subsection{Singularities}
\label{sec:prelim_singularities}
%%%%%%%%%%%%%%%%%%%%%%%%%%%%%%%%%%%%%
Next we consider the singularities we are interested in.
%% To keep things brief, we refer the reader to \Cref{sec:intro} for a highlight on
%% recent development about the notions we study below.
Following \eqref{eq:intro_mhm_composition} and
\eqref{eq:intro_irrationality_complex_pth}, there exist a composition of natural
morphisms given by
\begin{equation}
  \label{eq:1}
  \underline{\Omega}^p_X \to \mathcal{I} \Omega^p_X \to \sR\sHom(
  \underline{\Omega}^{\dim X-p}_X , \omega^\kdot_X)[-\dim X] \simeq \Irr^p_X. 
\end{equation}
%There is a barrage of new singularities arising from ideas in \Cref{sec:prelim_MHM}
%and the composition above. However, we confine ourselves to the following;
For $m\geq 0$, we say that $X$ has
\begin{itemize}
\item \emph{pre-$m$-rational singularities} if $\mathcal{H}^j (\Irr^p_X) = 0$ for all
  $0\leq p \leq m$ and $j>0$,
\item \emph{pre-$m$-Du~Bois singularities} if
  $\mathcal{H}^j (\underline{\Omega}^p_X) = 0$ for all $0\leq p \leq m$ and $j>0$,
\end{itemize}
These (and their variants) were first studied in \cite[Definition
1.1]{Shen/Venkatesh/Vo:2023}, with further variants appearing in \cite[Definition
4.2]{Kovacs:2025}. Observe that we say `pre-$m$-' as opposed to `pre-$k$-' like the
case of loc.\ cit. This follows the convention of \cite[\S 4]{Kovacs:2025} as
`$k$-rational' is often used to refer to points defined over a base field
$k$. Moreover, our definition for pre-$m$-rational singularities is equivalent to
that of \cite[Definition 1.1]{Shen/Venkatesh/Vo:2023} by using the irrationality
complex of $X$ (see \eqref{eq:intro_irrationality_complex_pth}). An important fact is
that a normal variety over $\mathbb{C}$ having pre-$m$-rational singularities implies
that it has pre-$m$-Du~Bois singularities. This is \cite[Theorem
B]{Shen/Venkatesh/Vo:2023} (see also \cite[Theorem 1.4]{Kovacs:2025}).

%%%%%%%%%%%%%%%%%%%%%%%%%%%%%%%%%%%%%
\section{Main results}
\label{sec:main_results}
%%%%%%%%%%%%%%%%%%%%%%%%%%%%%%%%%%%%%

We start with some simple results. The following lemma is likely well known, but we
include it for the sake of being self-contained.

\begin{lemma}[Kebekus--Schnell]
  \label{lem:Kebekus_Schnell}
  For any variety $X$ over $\mathbb{C}$ with rational singularities, if
  $0\leq p \leq \dim X$, then
  $\mathcal{H}^0(\underline{\Omega}^p_X) \xrightarrow{ntrl.}
  \mathcal{H}^0(\mathcal{I} \underline{\Omega}^p_X)$ and
  $\mathcal{H}^0(\mathcal{I} \underline{\Omega}^p_X)\xrightarrow{ntrl.}
  \widetilde{\Irr}^p_X$ are isomorphisms, and each of these sheaves is isomorphic to
  the reflexive differentials $\Omega^{[p]}_X$.
\end{lemma}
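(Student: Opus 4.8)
The plan is to identify all three sheaves with the reflexive differentials $\Omega^{[p]}_X := (\Omega^p_X)^{\vee\vee}$ by comparing each of them, via the natural maps coming from \eqref{eq:1}, to the $\mathcal{H}^0$ of $\underline{\Omega}^p_X$. Recall that for a variety with rational singularities (in particular normal), a strong log resolution $f\colon \widetilde X\to X$ induces an isomorphism $\mathcal{O}_X \xrightarrow{\sim} \sR f_*\mathcal{O}_{\widetilde X}$, and hence by Kebekus--Schnell $\mathcal{H}^0(\underline{\Omega}^p_X)\cong f_*\Omega^p_{\widetilde X}\cong \Omega^{[p]}_X$. (If $X$ is not normal, replacing $X$ by its normalization $\nu\colon X^\nu\to X$ does not change the statement, since rational singularities are normal; but in fact the whole lemma is only used with $X$ normal, so I would simply add normality as a hypothesis or note that rational implies normal.)

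First I would handle the map $\mathcal{H}^0(\underline{\Omega}^p_X)\to \mathcal{H}^0(\mathcal{I}\underline{\Omega}^p_X)$. The key input is that the cone of $\underline{\Omega}^p_X\to \mathcal{I}\underline{\Omega}^p_X$ is supported on the non-normal, i.e.\ (since $X$ is normal) a proper closed subset of $X$, and more precisely that this map is an isomorphism in a neighbourhood of the generic point and even after applying $\sR f_*$ on a resolution. A cleaner route: both $\mathcal{H}^0(\underline\Omega^p_X)$ and $\mathcal{H}^0(\mathcal I\underline\Omega^p_X)$ are known to be torsion-free (indeed reflexive in the second case by \cite{Popa/Shen/DucVo:2024}), they agree on the smooth locus with $\Omega^p_{X_{\operatorname{sm}}}$, and a torsion-free sheaf that agrees with a reflexive sheaf away from codimension $2$ and whose source is $\Omega^{[p]}_X$ must be the saturation; combined with reflexivity of the target this forces $\mathcal{H}^0(\mathcal I\underline\Omega^p_X)\cong\Omega^{[p]}_X$ and the map to be an isomorphism. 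I expect this to be the technical heart: one needs the reflexivity of $\mathcal{H}^0(\mathcal{I}\underline{\Omega}^p_X)$ (this is where the rational singularities hypothesis really pays off — it is known in general that $\mathcal H^0(\mathcal I\underline\Omega^p_X)$ is reflexive, cf.\ loc.\ cit.) together with the identification on the smooth locus.

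Next I would treat $\mathcal{H}^0(\mathcal{I}\underline{\Omega}^p_X)\to \widetilde{\Irr}^p_X = \mathcal{H}^0(\Irr^p_X)$. Here I would use the defining isomorphism $\Irr^p_X\simeq \sR\sHom(\underline{\Omega}^{\dim X-p}_X,\omega^\kdot_X)[-\dim X]$ together with Grothendieck duality on a resolution: $\sR f_*\Omega^{\dim X-p}_{\widetilde X}$ represents $\underline{\Omega}^{\dim X-p}_X$ (again using rationality), and dualizing gives that $\Irr^p_X\simeq \sR f_*\Omega^{p}_{\widetilde X}$ by the self-duality $\sR\sHom(\Omega^{q}_{\widetilde X},\omega^\kdot_{\widetilde X})\simeq \Omega^{\dim X - q}_{\widetilde X}[\dim X]$ on the smooth variety $\widetilde X$ and relative duality $\sR f_*\sR\sHom_{\widetilde X}(-,\omega^\kdot_{\widetilde X})\simeq \sR\sHom_X(\sR f_*(-),\omega^\kdot_X)$. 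Taking $\mathcal{H}^0$ yields $\widetilde{\Irr}^p_X\cong f_*\Omega^p_{\widetilde X}\cong \Omega^{[p]}_X$, and a diagram chase identifying the composite $\underline\Omega^p_X\to\mathcal I\underline\Omega^p_X\to\Irr^p_X$ with the natural map $\underline\Omega^p_X\to\sR f_*\Omega^p_{\widetilde X}$ shows the relevant $\mathcal H^0$-maps are isomorphisms. The main obstacle, as above, is organizing the compatibilities: that the abstract maps in \eqref{eq:1} coming from mixed Hodge module duality coincide (on $\mathcal H^0$, on the smooth locus) with the concrete pull-back/trace maps on a resolution, so that reflexivity plus agreement in codimension one forces global isomorphism.
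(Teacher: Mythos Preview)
Your treatment of the second map contains a genuine error. You assert that for $X$ with rational singularities, $\sR f_*\Omega^{\dim X-p}_{\widetilde X}$ ``represents'' $\underline{\Omega}^{\dim X-p}_X$, and then dualize to obtain $\Irr^p_X\simeq \sR f_*\Omega^p_{\widetilde X}$. But rational singularities only give $\underline{\Omega}^0_X\simeq \sR f_*\mathcal{O}_{\widetilde X}$; for $q>0$ the natural map $\underline{\Omega}^q_X\to \sR f_*\Omega^q_{\widetilde X}$ is \emph{not} a quasi-isomorphism in general (the Du~Bois complex is built from a full hyperresolution, and the correction terms coming from the exceptional locus do not vanish just because $X$ is rational). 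Consequently your Grothendieck-duality computation does not compute $\Irr^p_X$, and the identification $\widetilde{\Irr}^p_X\cong f_*\Omega^p_{\widetilde X}$ does not follow from your argument. What you would actually need here is precisely the content of \cite[Cor.~3.18]{Kovacs:2025}, which establishes $\mathcal H^0(\underline\Omega^p_X)\cong\widetilde{\Irr}^p_X\cong\Omega^{[p]}_X$ for rational $X$ by a different route.

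The paper's proof is organized differently and avoids this problem entirely: it first invokes \cite[Cor.~3.18]{Kovacs:2025} to get that the \emph{composite} $\mathcal H^0(\underline\Omega^p_X)\to\widetilde{\Irr}^p_X$ is an isomorphism onto $\Omega^{[p]}_X$, observes that this composite factors through $\mathcal H^0(\mathcal I\underline\Omega^p_X)$ via \eqref{eq:1}, and then uses only the torsion-freeness of $\mathcal H^0(\mathcal I\underline\Omega^p_X)$ (from \cite[(8.4.1) \& Prop.~8.1]{Kebekus/Schnell:2021}) to conclude. The point is that once the outer map is an isomorphism, the middle term splits as $\Omega^{[p]}_X\oplus K$ with $K$ torsion-free and supported on the singular locus, hence $K=0$. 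This requires neither reflexivity of the middle term nor any separate analysis of the map to $\widetilde{\Irr}^p_X$.
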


\begin{proof}
  $\mathcal{H}^0(\underline{\Omega}^p_X) \xrightarrow{ntrl.} \widetilde{\Irr}^p_X$ is
  an isomorphism and both of these sheaves are isomorphic to $\Omega^{[p]}_X$ by
  \cite[Corollary 3.18]{Kovacs:2025}.  This isomorphism factors through
  $\mathcal{H}^0(\mathcal{I} \underline{\Omega}^p_X)$ by \eqref{eq:1}, and
  % \[
  %   \xymatrix{%
  %     \mathcal{H}^0( \underline{\Omega}^p_X) \ar@/^1.5em/[rr]^\simeq \ar[r] &
  %     \mathcal{H}^0(\mathcal{I} \Omega^p_X) \ar[r] & \widetilde\Irr^p_X.  }%
  % \]
  % On the other hand,
  $\mathcal{H}^0 (\mathcal{I} \underline{\Omega}^p_X)$ is torsion-free by
  \cite[(8.4.1) \& Prop.~8.1]{Kebekus/Schnell:2021}, which implies the desired
  statement.
\end{proof}

Next we show that pre-$m$-rational singularities can be defined in terms of the
intersection Du Bois complexes, a fact that is essentially contained in
\cite{Popa/Shen/DucVo:2024} and \cite{Dirks/Olano/Raychaudhury:2025}.

\begin{lemma}
    \label{lem:pre_m_rational_via_intersection_db_isomorphism}
    Let $X$ be a normal variety over $\mathbb{C}$. Then the following are equivalent
    for any $m\geq 0$:
    \begin{enumerate}
    \item\label{item:1} $X$ has pre-$m$-rational singularities
    \item\label{item:2}
      $\mathcal{H}^0(\mathcal{I}\underline{\Omega}^p_X) \xrightarrow{ntrl.}
      \mathcal{I}\underline{\Omega}^p_X$ is an isomorphism for all $0\leq p \leq m$.
    \end{enumerate}
\end{lemma}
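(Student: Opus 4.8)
The plan is to reduce the equivalence to a comparison of cohomology sheaves, and then carry out that comparison via Grothendieck duality together with the self-duality of $\mathcal{IC}_X^H$. Since $\mathcal{I}\underline{\Omega}^p_X$ and $\Irr^p_X$ both lie in $D^{\geq0}_{\operatorname{coh}}(X)$ — which is exactly what makes the natural maps $\mathcal{H}^0(\mathcal{I}\underline{\Omega}^p_X)\to\mathcal{I}\underline{\Omega}^p_X$ and $\widetilde{\Irr}^p_X\to\Irr^p_X$ of the introduction meaningful, cf.\ \cite{Kovacs:2025, Popa/Shen/DucVo:2024} — condition \eqref{item:2} is equivalent to $\mathcal{H}^j(\mathcal{I}\underline{\Omega}^p_X)=0$ for all $j>0$ and $0\le p\le m$, while by definition $X$ has pre-$m$-rational singularities precisely when $\mathcal{H}^j(\Irr^p_X)=0$ for all $j>0$ and $0\le p\le m$. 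Hence it suffices to prove that for each such $p$ the morphism $\mathcal{I}\underline{\Omega}^p_X\to\Irr^p_X$ of \eqref{eq:1} is an isomorphism on $\mathcal{H}^j$ for all $j\ge1$; equivalently, that its cone lies in $D^{\le0}_{\operatorname{coh}}(X)$.

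To control that cone, note that by the compatibility of $\mathbb{D}_X$ with the graded de Rham functors recalled in \Cref{sec:prelim_MHM} and by $\mathbb{D}_X(\mathcal{IC}_X^H)\simeq\mathcal{IC}_X^H(\dim X)$, the morphism $\mathcal{I}\underline{\Omega}^p_X\to\Irr^p_X$ is obtained by applying $\sR\sHom(-,\omega^\kdot_X)[-\dim X]$ to the natural morphism $\underline{\Omega}^{\dim X-p}_X\to\mathcal{I}\underline{\Omega}^{\dim X-p}_X$ (compare \eqref{eq:intro_mhm_composition}). Writing $A:=\operatorname{cone}\bigl(\underline{\Omega}^{\dim X-p}_X\to\mathcal{I}\underline{\Omega}^{\dim X-p}_X\bigr)$, the cone in question is therefore $\sR\sHom(A,\omega^\kdot_X)[1-\dim X]$. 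Applying the exact functor $\operatorname{gr}^F_{-(\dim X-p)}\operatorname{DR}_X$ to the triangle $\mathbb{Q}^H_X[\dim X]\xrightarrow{\gamma_X}\mathcal{IC}_X^H\to\operatorname{cone}(\gamma_X)\xrightarrow{+1}$ identifies $A$ with $\operatorname{gr}^F_{-(\dim X-p)}\operatorname{DR}_X(\operatorname{cone}\gamma_X)[-p]$. Since $\gamma_X$ factors (see \eqref{eq:intersection_composition}) through the surjection $\mathcal{H}^0(\mathbb{Q}^H_X[\dim X])\twoheadrightarrow\operatorname{gr}^W_{\dim X}\mathcal{H}^0(\mathbb{Q}^H_X[\dim X])=\mathcal{IC}_X^H$, and $X$ is normal so that $\mathcal{H}^{-1}(\mathbb{Q}^H_X[\dim X])=0$, the complex $\operatorname{cone}(\gamma_X)$ has perverse cohomology concentrated in degrees $\le-1$, all of it supported on $X_{\operatorname{sing}}$ and hence in codimension $\ge2$.

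The heart of the matter is to deduce from this that $\sR\sHom(A,\omega^\kdot_X)[1-\dim X]\in D^{\le0}_{\operatorname{coh}}(X)$. This means feeding the description of $\operatorname{cone}(\gamma_X)$ through $\operatorname{gr}^F_{-(\dim X-p)}\operatorname{DR}_X$ — using the lower bound $F_{\le-1}=0$ on the Hodge filtration and Saito's theorem bounding, by codimension of support, the cohomology sheaves of the graded de Rham complexes of pure Hodge modules and of $\mathbb{Q}^H_X[\dim X]$ — and then through Grothendieck duality. This is exactly the bookkeeping already carried out in \cite[\S9]{Popa/Shen/DucVo:2024} and \cite{Dirks/Olano/Raychaudhury:2025}; indeed the conclusion of this step may be quoted directly from those sources, noting for \eqref{item:2} that $\mathcal{I}\underline{\Omega}^p_X$ concentrated in degree $0$ is the assertion $\mathcal{I}\underline{\Omega}^p_X\simeq\Omega^{[p]}_X$ (compare \Cref{lem:Kebekus_Schnell}). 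Granting it, the long exact cohomology sequence of $\mathcal{I}\underline{\Omega}^p_X\to\Irr^p_X\to\operatorname{cone}\xrightarrow{+1}$ gives $\mathcal{H}^j(\mathcal{I}\underline{\Omega}^p_X)\cong\mathcal{H}^j(\Irr^p_X)$ for all $j\ge1$, which proves both implications simultaneously.

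I expect this last degree estimate to be the main obstacle. Normality of $X$ only forces the cohomology of $\operatorname{cone}(\gamma_X)$ to be supported in codimension $\ge2$, and a crude application of Grothendieck duality with no more than that does not place the cone in $D^{\le0}$ once $\dim X\ge4$: one genuinely needs the finer codimension bounds for the cohomology sheaves of graded de Rham complexes — equivalently, for a strong log resolution $\pi\colon\widetilde{X}\to X$, the splitting of $\sR\pi_*\Omega^\kdot_{\widetilde{X}}$ furnished by the decomposition theorem, under which $\mathcal{I}\underline{\Omega}^p_X$ is the $\mathcal{IC}_X^H$-summand of $\sR\pi_*\Omega^p_{\widetilde{X}}$. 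Absorbing this input is where the weight of the proof lies, and it is the reason the lemma is "essentially contained in" \cite{Popa/Shen/DucVo:2024, Dirks/Olano/Raychaudhury:2025} rather than immediate.
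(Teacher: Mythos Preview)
Your route is genuinely different from the paper's. You aim for the unconditional statement that $\operatorname{cone}\bigl(\mathcal{I}\underline{\Omega}^p_X\to\Irr^p_X\bigr)\in D^{\le0}_{\operatorname{coh}}(X)$ for every $p$ on any normal $X$, which would give both implications at once. The paper instead argues by induction on $m$: the base case $m=0$ is rational singularities via $\mathcal{I}\underline{\Omega}^0_X\simeq\sR f_*\mathcal{O}_{\widetilde{X}}$; in the inductive step one already knows $X$ has rational singularities, so \Cref{lem:Kebekus_Schnell} applies, the commuting square forces $\mathcal{H}^0(\underline{\Omega}^m_X)\to\underline{\Omega}^m_X$ to split, \cite[Thm.~10.8]{Kovacs:2025} then gives pre-$m$-Du~Bois, hence $\underline{\Omega}^m_X\to\mathcal{I}\underline{\Omega}^m_X$ is an isomorphism, and one concludes via \cite[Rmk.~4.8(4),~5.2]{Dirks/Olano/Raychaudhury:2025}. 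The forward implication uses \cite[Lem.~9.4]{Popa/Shen/DucVo:2024} together with pre-$m$-rational $\Rightarrow$ pre-$m$-Du~Bois.

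The gap in your proposal is exactly where you locate it, but it is a real gap rather than bookkeeping: the amplitude bound on $\sR\sHom(A,\omega^\kdot_X)[1-\dim X]$ cannot be ``quoted directly'' from \cite[\S9]{Popa/Shen/DucVo:2024} or \cite{Dirks/Olano/Raychaudhury:2025}. Those sources supply \emph{conditional} statements (pre-$m$-rational $\Rightarrow$ $\underline{\Omega}^p_X\simeq\mathcal{I}\underline{\Omega}^p_X$; pre-$m$-Du~Bois plus that isomorphism $\Rightarrow$ pre-$m$-rational), not an unconditional cone estimate valid for arbitrary normal $X$. Your claim is strictly stronger than the lemma: it would yield $\mathcal{H}^j(\mathcal{I}\underline{\Omega}^p_X)\cong\mathcal{H}^j(\Irr^p_X)$ for each individual $p$ and each $j\ge1$, whereas the lemma only equates the \emph{conjoint} vanishing over $0\le p\le m$. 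The paper's inductive argument sidesteps this precisely by using the rational singularities established at level $0$ to unlock \Cref{lem:Kebekus_Schnell} and the Du~Bois splitting criterion at each subsequent level; without that inductive leverage you would need to actually prove the codimension-and-Hodge-filtration estimate you allude to, which is not in the references as stated.
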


\begin{proof}
  From \cite[Lemma 9.4]{Popa/Shen/DucVo:2024}, $X$ having pre-$m$-rational
  singularities ensures that
  $\underline{\Omega}^p_X \xrightarrow{ntrl.} \mathcal{I}\underline{\Omega}^p_X$ is
  an isomorphism for all $0\leq p \leq m$. However, \cite[Theorem
  B]{Shen/Venkatesh/Vo:2023} (or \cite[Theorem 1.4]{Kovacs:2025}) tells us that $X$
  must also have pre-$m$-Du~Bois singularities. Therefore,
  $\mathcal{H}^0 (\underline{\Omega}^p_X) \xrightarrow{ntrl.} \underline{\Omega}^p_X$
  is an isomorphism for all $0\leq p \leq m$, and hence
  $\mathcal{H}^0(\mathcal{I}\underline{\Omega}^p_X) \xrightarrow{ntrl.}
  \mathcal{I}\underline{\Omega}^p_X$ is an isomorphism for all $0\leq p \leq m$.
    
  Conversely, assume that
  $\mathcal{H}^0(\mathcal{I}\underline{\Omega}^p_X) \xrightarrow{ntrl.}
  \mathcal{I}\underline{\Omega}^p_X$ is an isomorphism for all $0\leq p \leq m$. We
  will use induction on $m$ and start with the base case $m=0$. By \cite[(8.4.1) \&
  Proposition 8.2]{Kebekus/Schnell:2021}, we know that
  $\mathcal{I}\underline{\Omega}^0_X \xrightarrow{ntrl.} \sR f_\ast
  \mathcal{O}_{\widetilde{X}}$ is an isomorphism where $f\colon \widetilde{X}\to X$
  is a strong log resolution. We have
  $\mathcal{O}_X \simeq f_\ast \mathcal{O}_{\widetilde{X}}$ because $X$ is
  normal. Hence, \eqref{item:2} implies that
  $\mathcal{O}_X \xrightarrow{ntrl.} \sR f_\ast \mathcal{O}_{\widetilde{X}}$ is an
  isomorphism, and so $X$ has rational singularities.

  Next, assume that the desired claim holds for $0\leq p \leq m-1$, i.e., that $X$
  has pre-$(m-1)$-rational singularities. Then it also has pre-$(m-1)$-Du~Bois
  singularities (e.g.\ see \cite[Theorem B]{Shen/Venkatesh/Vo:2023} or \cite[Theorem
  1.4]{Kovacs:2025}). Furthermore,
  $\mathcal{H}^0 (\underline{\Omega}^m_X) \xrightarrow{ntrl.}
  \mathcal{H}^0(\mathcal{I}\underline{\Omega}^m_X)$ is an isomorphism by
  \Cref{lem:Kebekus_Schnell}, so the top and right arrows, and hence the diagonal
  arrow in the following diagram are isomorphisms,
  \[
    \xymatrix{%
      \mathcal{H}^0 (\underline{\Omega}^m_X) \ar[r]^-\simeq \ar[rd]^\simeq \ar[d] &
      \mathcal{H}^0(\mathcal{I}\underline{\Omega}^m_X) \ar[d]^\simeq \\
      \underline{\Omega}^m_X \ar[r] & \mathcal{I}\underline{\Omega}^m_X. }
  \]
  It follows that the natural morphism
  $\mathcal{H}^0 (\underline{\Omega}^m_X) \xrightarrow{ntrl.}
  \underline{\Omega}^m_X$ has a left inverse, and then $X$ has pre-$m$-Du~Bois
  singularities by \cite[Theorem 10.8]{Kovacs:2025}. Therefore,
  $\mathcal{H}^0 (\underline{\Omega}^m_X) \xrightarrow{ntrl.}
  \underline{\Omega}^m_X$ is actually an isomorphism, and hence
  $\underline{\Omega}^m_X \xrightarrow{ntrl.} \mathcal{I} \underline{\Omega}^m_X$
  must also be an isomorphism. Then $X$ has pre-$m$-rational singularities by
  \cite[Remark's 4.8(4) \& 5.2]{Dirks/Olano/Raychaudhury:2025}, which completes the
  proof.
\end{proof}

\begin{theorem}
    \label{thm:splitting_via_irr_complex}
    Let $X$ be a normal variety over $\mathbb{C}$. Then the following are equivalent
    for any $m\geq 0$:
    \begin{enumerate}
        \item \label{thm:splitting_via_irr_complex1} $X$ has pre-$m$-rational singularities
        \item \label{thm:splitting_via_irr_complex2}
          $\mathcal{H}^0(\mathcal{I}\underline{\Omega}^p_X) \xrightarrow{ntrl.}
          \mathcal{I}\underline{\Omega}^p_X$ has a left inverse for each
          $0\leq p \leq m$, and
        \item \label{thm:splitting_via_irr_complex3}
          $\widetilde{\Irr}^p_X \xrightarrow{ntrl.} \Irr^p_X$ has a left inverse for
          each $0\leq p \leq m$.
    \end{enumerate}
\end{theorem}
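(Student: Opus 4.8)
The plan is as follows. The implication \eqref{thm:splitting_via_irr_complex1}$\Rightarrow$\eqref{thm:splitting_via_irr_complex2} is immediate from \Cref{lem:pre_m_rational_via_intersection_db_isomorphism}, since an isomorphism has a left inverse; and \eqref{thm:splitting_via_irr_complex1}$\Rightarrow$\eqref{thm:splitting_via_irr_complex3} is immediate from the definition, since pre-$m$-rationality says $\mathcal{H}^j(\Irr^p_X)=0$ for $j>0$ and $0\le p\le m$, and as $\Irr^p_X$ has no cohomology in negative degrees this forces the natural map $\widetilde{\Irr}^p_X\to\Irr^p_X$ to be an isomorphism. The content therefore lies in \eqref{thm:splitting_via_irr_complex2}$\Rightarrow$\eqref{thm:splitting_via_irr_complex1} and \eqref{thm:splitting_via_irr_complex3}$\Rightarrow$\eqref{thm:splitting_via_irr_complex1}, which I would establish by one common mechanism.

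The key is the following ``splitting implies isomorphism'' principle. Write $\mathbb{D}_X(-)=\sR\sHom(-,\omega^\kdot_X)[-\dim X]$ for the shifted Grothendieck dual on $D^b_{\operatorname{coh}}(X)$. \emph{If $X$ is Cohen--Macaulay, if $K\in D^b_{\operatorname{coh}}(X)$ satisfies $\mathcal{H}^j(K)=\mathcal{H}^j(\mathbb{D}_X K)=0$ for all $j<0$, and if $\mathcal{H}^0(K)\xrightarrow{ntrl.}K$ has a left inverse, then it is an isomorphism.} Indeed, $D^b_{\operatorname{coh}}(X)$ is idempotent complete, so a left inverse produces a direct sum decomposition $K\simeq\mathcal{H}^0(K)\oplus C$ in which $\mathcal{H}^0(K)\xrightarrow{ntrl.}K$ is the inclusion of the first summand; taking $\mathcal{H}^0$ shows $\mathcal{H}^0(C)=0$, and $\mathcal{H}^j(K)=0$ for $j<0$ then forces $\mathcal{H}^j(C)=0$ for $j\le 0$. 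Since $X$ is Cohen--Macaulay, $\omega^\kdot_X\simeq\omega_X[\dim X]$ and $\mathbb{D}_X\simeq\sR\sHom(-,\omega_X)$ is a faithful duality; hence $\mathbb{D}_X(C)=\sR\sHom(C,\omega_X)$ has cohomology only in negative degrees, while it is a direct summand of $\mathbb{D}_X(K)$, which has none. So $\mathbb{D}_X(C)=0$, whence $C=0$ and $K\simeq\mathcal{H}^0(K)$.

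To apply this I would first extract Cohen--Macaulayness from the $p=0$ case. For \eqref{thm:splitting_via_irr_complex2}$\Rightarrow$\eqref{thm:splitting_via_irr_complex1} this is the classical picture: $\mathcal{I}\underline{\Omega}^0_X\simeq\sR f_\ast\mathcal O_{\widetilde X}$ (Kebekus--Schnell, as in the proof of \Cref{lem:pre_m_rational_via_intersection_db_isomorphism}) and $\mathcal{H}^0(\mathcal{I}\underline{\Omega}^0_X)\simeq\mathcal O_X$ by normality, so a left inverse of $\mathcal O_X\to\sR f_\ast\mathcal O_{\widetilde X}$, together with Grothendieck duality and Grauert--Riemenschneider vanishing (which give $\sR\sHom(\sR f_\ast\mathcal O_{\widetilde X},\omega^\kdot_X)\simeq\sR f_\ast\omega^\kdot_{\widetilde X}\simeq(f_\ast\omega_{\widetilde X})[\dim X]$), exhibits $\omega^\kdot_X$ as a direct summand of a complex concentrated in one degree; so $X$ is Cohen--Macaulay (indeed it has rational singularities, recovering \cite{Kovacs:2000,Bhatt:2012}). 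Once $X$ is Cohen--Macaulay, I would run the principle with $K=\mathcal{I}\underline{\Omega}^p_X$ for each $0\le p\le m$: here $\mathbb{D}_X(\mathcal{I}\underline{\Omega}^p_X)\simeq\mathcal{I}\underline{\Omega}^{\dim X-p}_X$, which follows from the self-duality $\mathbb{D}_X(\mathcal{IC}_X^H)\simeq\mathcal{IC}_X^H(\dim X)$ and the compatibility of $\mathbb{D}_X$ with the graded de~Rham functors recalled in \Cref{sec:prelim_MHM}, and both $\mathcal{I}\underline{\Omega}^p_X$ and $\mathcal{I}\underline{\Omega}^{\dim X-p}_X$ have no cohomology in negative degrees. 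This makes $\mathcal{H}^0(\mathcal{I}\underline{\Omega}^p_X)\xrightarrow{ntrl.}\mathcal{I}\underline{\Omega}^p_X$ an isomorphism for all $0\le p\le m$, and \Cref{lem:pre_m_rational_via_intersection_db_isomorphism} gives \eqref{thm:splitting_via_irr_complex1}. For \eqref{thm:splitting_via_irr_complex3}$\Rightarrow$\eqref{thm:splitting_via_irr_complex1} the argument is identical with $K=\Irr^p_X$, using $\mathbb{D}_X(\Irr^p_X)\simeq\underline{\Omega}^{\dim X-p}_X$ (biduality applied to the definition $\Irr^p_X=\mathbb{D}_X(\underline{\Omega}^{\dim X-p}_X)$) and the vanishing of negative cohomology of $\underline{\Omega}^{\dim X-p}_X$; the resulting conclusion $\mathcal{H}^j(\Irr^p_X)=0$ for $j>0$, $0\le p\le m$, is pre-$m$-rationality by definition.

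The step I expect to be the main obstacle is the $p=0$ case of \eqref{thm:splitting_via_irr_complex3}$\Rightarrow$\eqref{thm:splitting_via_irr_complex1}: one must deduce Cohen--Macaulayness (equivalently, rational singularities) of $X$ from a left inverse of $\widetilde{\Irr}^0_X\to\Irr^0_X$, and $\Irr^0_X\simeq\sR\sHom(\underline{\Omega}^{\dim X}_X,\omega^\kdot_X)[-\dim X]$ has no ready resolution-theoretic description of the kind available for $\mathcal{I}\underline{\Omega}^0_X$. I would handle this either by identifying $\widetilde{\Irr}^0_X$ with $\mathcal O_X$ for a normal variety and then reducing to the classical criterion via the natural map $\mathcal{I}\underline{\Omega}^0_X\to\Irr^0_X$ of \eqref{eq:1}, or by deriving \eqref{thm:splitting_via_irr_complex3}$\Rightarrow$\eqref{thm:splitting_via_irr_complex2} directly: once rationality is known, $\mathcal{H}^0(\mathcal{I}\underline{\Omega}^p_X)\xrightarrow{ntrl.}\widetilde{\Irr}^p_X$ is an isomorphism by \Cref{lem:Kebekus_Schnell}, and a left inverse may be transported along it and the comparison map $\mathcal{I}\underline{\Omega}^p_X\to\Irr^p_X$. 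A smaller point to pin down is that $\mathcal{I}\underline{\Omega}^p_X$ and $\Irr^p_X$ indeed lie in $D^{\ge 0}_{\operatorname{coh}}(X)$, and that the self-duality of the intersection Du~Bois complexes holds with the normalizations in use here.
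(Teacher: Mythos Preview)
Your handling of the easy implications and of the $p=0$ base case for \eqref{thm:splitting_via_irr_complex2}$\Rightarrow$\eqref{thm:splitting_via_irr_complex1} is correct and matches the paper. The genuine gap is in your ``splitting implies isomorphism'' principle: you assert that if $C\in D^{\ge 1}_{\operatorname{coh}}(X)$ with $X$ Cohen--Macaulay then $\sR\sHom(C,\omega_X)$ lies in negative degrees, but Grothendieck duality does not swap $D^{\ge 0}$ and $D^{\le 0}$ in this way. For a coherent sheaf $F$ on a Cohen--Macaulay scheme of dimension $n$ the complex $\sR\sHom(F,\omega_X)$ sits in degrees $[0,n]$, so $\sR\sHom(F[-1],\omega_X)$ sits in $[-1,n-1]$, not in $D^{<0}$. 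A concrete counterexample to your principle: on a smooth surface take $K=\mathcal O_X\oplus k_0[-1]$ with $k_0$ the skyscraper at a closed point; then $\mathbb D_X(K)\simeq\omega_X\oplus k_0[-1]$, so both $K$ and $\mathbb D_X(K)$ lie in $D^{\ge 0}$, the map $\mathcal H^0(K)\to K$ is a split inclusion, yet $K\not\simeq\mathcal H^0(K)$.

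The paper's proof of \eqref{thm:splitting_via_irr_complex2}$\Rightarrow$\eqref{thm:splitting_via_irr_complex1} supplies the missing ingredient from Hodge theory rather than from homological generalities. Dualizing the split map shows that
\[
\sR\sHom(\mathcal I\underline\Omega^p_X,\omega^\kdot_X)\to\sR\sHom(\mathcal H^0(\mathcal I\underline\Omega^p_X),\omega^\kdot_X)
\]
is surjective on cohomology sheaves; it is then also injective on cohomology sheaves by the injectivity theorem (\cite[Conjecture~10.1 and Theorem~10.3]{Popa/Shen/DucVo:2024} together with \cite[Theorem~1.1]{Kovacs:2025}), hence an isomorphism, and one dualizes back and invokes \Cref{lem:pre_m_rational_via_intersection_db_isomorphism}. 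This Hodge-theoretic injectivity is exactly the non-elementary input your argument tries to bypass. For \eqref{thm:splitting_via_irr_complex3} the paper does not run a parallel argument on $\Irr^p_X$; it proves \eqref{thm:splitting_via_irr_complex3}$\Rightarrow$\eqref{thm:splitting_via_irr_complex2} directly by transporting the left inverse along the comparison $\mathcal I\underline\Omega^p_X\to\Irr^p_X$ and the isomorphism $\mathcal H^0(\mathcal I\underline\Omega^p_X)\simeq\widetilde\Irr^p_X$ of \Cref{lem:Kebekus_Schnell}, which is essentially the second route you sketch in your final paragraph.
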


\begin{proof}
  From \Cref{lem:pre_m_rational_via_intersection_db_isomorphism}, we know that
  $\eqref{thm:splitting_via_irr_complex1}\implies
  \eqref{thm:splitting_via_irr_complex2}$, whereas
  $\eqref{thm:splitting_via_irr_complex1} \implies
  \eqref{thm:splitting_via_irr_complex3}$ is straightforward by the definition of
  pre-$m$-rational singularities.

  Next, we show that
  $\eqref{thm:splitting_via_irr_complex2}\implies
  \eqref{thm:splitting_via_irr_complex1}$ by induction on $m$.
    
  By \cite[8.4.1 \& Proposition 8.2]{Kebekus/Schnell:2021}, we know that
  $\mathcal{I}\underline{\Omega}^0_X \xrightarrow{ntrl.} \sR f_\ast
  \mathcal{O}_{\widetilde{X}}$ is an isomorphism where $f\colon \widetilde{X}\to X$
  is a strong log resolution. We have
  $\mathcal{O}_X \simeq f_\ast \mathcal{O}_{\widetilde{X}}$ because $X$ is
  normal. Hence, our hypothesis implies that
  $\mathcal{O}_X \xrightarrow{ntrl.} \sR f_\ast \mathcal{O}_{\widetilde{X}}$ splits,
  and so $X$ has rational singularities by \cite[Theorem 1]{Kovacs:2000}.
    
  Next assume that $m\geq 1$ and choose a $p$, such that $0\leq p \leq m$. Then
  $\mathcal{H}^0( \mathcal{I}\underline{\Omega}^p_X) \xrightarrow{ntrl.}
  \mathcal{I}\underline{\Omega}^p_X$ admits a left inverse. Applying the functor
  $\sR\sHom(- , \omega^\kdot_X)$ gives a composition
    \begin{displaymath}
      \sR\sHom( \mathcal{H}^0 (\mathcal{I}\underline{\Omega}^p_X ), \omega^\kdot_X)
      \to \sR\sHom( \mathcal{I}\underline{\Omega}^p_X, \omega^\kdot_X) \to
      \sR\sHom(\mathcal{H}^0 (\mathcal{I}\underline{\Omega}^p_X ) , \omega^\kdot_X) 
    \end{displaymath}
    which is an isomorphism (in the derived category). In particular, we see that
    \begin{equation}
        \label{eq:pre_k_rational_dualized1}
        \sR\sHom( \mathcal{I}\underline{\Omega}^p_X, \omega^\kdot_X) \to
        \sR\sHom(\mathcal{H}^0 (\mathcal{I}\underline{\Omega}^p_X ) , \omega^\kdot_X) 
    \end{equation}
    is surjective on cohomology sheaves. However, \cite[Conjecture
    10.1]{Popa/Shen/DucVo:2024} coupled with \cite[Theorem 1.1]{Kovacs:2025} and
    \cite[Theorem 10.3]{Popa/Shen/DucVo:2024}, ensures that
    \eqref{eq:pre_k_rational_dualized1} must also be injective on cohomology sheaves
    (cf.~the comments after \cite[Corollary 1.3]{Kovacs:2025}).  It follows that
    \eqref{eq:pre_k_rational_dualized1} is an isomorphism and hence, after dualizing
    once more, we conclude that
    $\mathcal{H}^0(\mathcal{I}\underline{\Omega}^p_X) \xrightarrow{ntrl.}
    \mathcal{I}\underline{\Omega}^p_X$ is an isomorphism. Then
    \Cref{lem:pre_m_rational_via_intersection_db_isomorphism} implies that $X$ has
    pre-$m$-rational singularities, which completes the proof of this implication.

    Lastly we need to check that
    $\eqref{thm:splitting_via_irr_complex3} \implies
    \eqref{thm:splitting_via_irr_complex2}$. Let $0\leq p \leq m$ and consider the diagram,
    % Suppose $\alpha,\beta$ are respectively the inverses of
    % $\widetilde{\Irr}^p_X \xrightarrow{ntrl.} \Irr^p_X$ and
    % $\mathcal{H}^0(\mathcal{I}\underline{\Omega}^p_X) \xrightarrow{ntrl}
    % \widetilde{\Irr}^p_X$. Note $\beta$ exists via
    % \Cref{lem:Kebekus_Schnell}. Consider the composition
    % \begin{displaymath}
    %   \mathcal{H}^0(\mathcal{I}\underline{\Omega}^p_X) \xrightarrow{ntrl.}
    %   \mathcal{I}\underline{\Omega}^p_X \xrightarrow{ntrl.} \Irr^p_X
    %   \xrightarrow{\alpha} \widetilde{\Irr}^p_X \xrightarrow{\beta}
    %   \mathcal{H}^0(\mathcal{I}\underline{\Omega}^p_X). 
    % \end{displaymath}
    % There is a commutative square of `natural morphisms',
    % \begin{displaymath}
    %   % https://q.uiver.app/#q=WzAsNCxbMCwwLCJcXG1hdGhjYWx7SH1eMCAoXFxtYXRoY2Fse0l9XFx1bmRlcmxpbmV7XFxPbWVnYX1ecF9YKSJdLFsxLDAsIlxcd2lkZXRpbGRle0lycn1ecF9YIl0sWzEsMSwiSXJyXnBfWC4iXSxbMCwxLCJcXG1hdGhjYWx7SX1cXHVuZGVybGluZXtcXE9tZWdhfV5wX1giXSxbMCwxXSxbMSwyXSxbMCwzXSxbMywyXV0=
    %   \begin{tikzcd}
    %     {\mathcal{H}^0 (\mathcal{I}\underline{\Omega}^p_X)} & {\widetilde{\Irr}^p_X} \\
    %     {\mathcal{I}\underline{\Omega}^p_X} & {\Irr^p_X.}  \arrow[from=1-1, to=1-2]
    %     \arrow[from=1-1, to=2-1] \arrow[from=1-2, to=2-2] \arrow[from=2-1, to=2-2]
    % \end{tikzcd}
    % \end{displaymath}
    \[
      \xymatrix{%
        {\mathcal{H}^0 (\mathcal{I}\underline{\Omega}^p_X)} \ar[r]^-\simeq \ar[d] &
        {\widetilde{\Irr}^p_X} \ar[d] \\
        {\mathcal{I}\underline{\Omega}^p_X} \ar[r] & {\Irr^p_X.}
        \ar@/_1em/[u]_{\text{left inverse}} }
    \]
    Here, the top arrow is an isomorphism by \Cref{lem:Kebekus_Schnell} and the right 
    %\Sridhar{Here I don't think the right arrow is an isomorphism, rather we have a splitting of the right arrow. And that splitting can be used to construct a splitting of the left arrow.}
    arrow admits a left inverse by \eqref{thm:splitting_via_irr_complex3}.  It follows
    that
    $\mathcal{H}^0(\mathcal{I}\underline{\Omega}^p_X) \xrightarrow{ntrl.}
    \mathcal{I}\underline{\Omega}^p_X $ has a left inverse, which completes the
    proof.
\end{proof}

\begin{corollary}
    \label{cor:finite_cover_pre_m_rational}
    Let $f\colon Y \to X$ be a finite surjective morphism of normal varieties. If $Y$
    has pre-$m$-rational singularities, then so does $X$.
\end{corollary}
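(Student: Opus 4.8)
The plan is to exhibit $\mathcal{I}\underline{\Omega}^p_X$ as a direct summand of $f_\ast\mathcal{I}\underline{\Omega}^p_Y$ in $D^b_{\operatorname{coh}}(X)$ for every $p$, and then read off the conclusion from \Cref{lem:pre_m_rational_via_intersection_db_isomorphism}. Write $n=\dim X=\dim Y$ and $d=\deg f$. First I would produce, in $D^b(\operatorname{MHM}(X))$, a ``pullback'' morphism $\mathcal{IC}^H_X\to f_\ast\mathcal{IC}^H_Y$ together with a ``trace'' morphism $f_\ast\mathcal{IC}^H_Y\to\mathcal{IC}^H_X$ whose composite is multiplication by $d$ (hence, since $\operatorname{End}(\mathcal{IC}^H_X)\simeq\mathbb{C}$, an isomorphism). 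On the open set $U\subseteq X$ over which $f$ is étale these are just the unit $\mathbb{Q}^H_U[n]\to (f|_{f^{-1}U})_\ast\mathbb{Q}^H_{f^{-1}U}[n]$ and the transfer map; they extend uniquely to $X$ because $f_\ast\mathcal{IC}^H_Y$ is pure and the relevant $\operatorname{Hom}$--groups between intersection Hodge modules are computed on $U$. Alternatively the trace can be obtained from the pullback by applying $\mathbb{D}_X(-)$, using self-duality of $\mathcal{IC}^H$ and $\mathbb{D}_X\circ f_\ast\simeq f_\ast\circ\mathbb{D}_Y$; or one may simply invoke the classical fact that $\mathcal{IC}_X$ is a direct summand of $f_\ast\mathcal{IC}_Y$ for a finite surjective morphism of (irreducible) varieties, which upgrades to mixed Hodge modules by Saito's theory.

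Next I would apply the exact functor $\operatorname{gr}^F_{-p}\operatorname{DR}_X\bigl(-[p-n]\bigr)$. Since $\dim X=\dim Y$ the shifts match, and because $f$ is finite, $\operatorname{gr}^F_{\cdot}\operatorname{DR}$ commutes with $f_\ast$ (proper pushforward and finiteness force $Rf_\ast=f_\ast$), so we obtain morphisms $\mathcal{I}\underline{\Omega}^p_X\to f_\ast\mathcal{I}\underline{\Omega}^p_Y\to\mathcal{I}\underline{\Omega}^p_X$ in $D^b_{\operatorname{coh}}(X)$ whose composite is an isomorphism; that is, $\mathcal{I}\underline{\Omega}^p_X$ is a retract of $f_\ast\mathcal{I}\underline{\Omega}^p_Y$ for every $p$. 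Now I would bring in the hypothesis on $Y$: by \Cref{lem:pre_m_rational_via_intersection_db_isomorphism} applied to $Y$, the natural map $\mathcal{H}^0(\mathcal{I}\underline{\Omega}^p_Y)\to\mathcal{I}\underline{\Omega}^p_Y$ is an isomorphism for $0\le p\le m$, so $\mathcal{I}\underline{\Omega}^p_Y$ is a single sheaf in degree $0$; as $f$ is finite, hence affine, $f_\ast$ is exact and $f_\ast\mathcal{I}\underline{\Omega}^p_Y$ is also concentrated in degree $0$. Applying $\mathcal{H}^j$ to the retraction shows that $\mathcal{H}^j(\mathcal{I}\underline{\Omega}^p_X)$ is a retract of $\mathcal{H}^j(f_\ast\mathcal{I}\underline{\Omega}^p_Y)=0$ for $j\neq 0$, so $\mathcal{H}^0(\mathcal{I}\underline{\Omega}^p_X)\to\mathcal{I}\underline{\Omega}^p_X$ is an isomorphism for $0\le p\le m$, and $X$ has pre-$m$-rational singularities by \Cref{lem:pre_m_rational_via_intersection_db_isomorphism} (equivalently by \Cref{thm:splitting_via_irr_complex}, which is the form that ``demonstrates the power'' of the criterion).

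A parallel route, which is perhaps cleaner to write since it stays within coherent sheaves, is to work with the irrationality complex and \Cref{thm:splitting_via_irr_complex}: build $\underline{\Omega}^{n-p}_X\to f_\ast\underline{\Omega}^{n-p}_Y\to\underline{\Omega}^{n-p}_X$ with composite $\times d$ from the analogous maps $\mathbb{Q}^H_X[n]\to f_\ast\mathbb{Q}^H_Y[n]\to\mathbb{Q}^H_X[n]$, then apply $\sR\sHom(-,\omega^\kdot_X)[-n]$ and Grothendieck duality $\sR\sHom(f_\ast-,\omega^\kdot_X)\simeq f_\ast\sR\sHom(-,\omega^\kdot_Y)$ to get that $\Irr^p_X$ is a retract of $f_\ast\Irr^p_Y$; since $Y$ pre-$m$-rational makes $\Irr^p_Y$ (hence $f_\ast\Irr^p_Y$) concentrated in degree $0$ for $0\le p\le m$, the same $\mathcal{H}^j$ argument gives $\mathcal{H}^j(\Irr^p_X)=0$ for $j>0$.

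I expect the main obstacle to be exactly the first step: producing the pullback and trace morphisms on $\mathcal{IC}^H$ inside $D^b(\operatorname{MHM}(X))$ with the correct normalization of their composite — in particular verifying that the transfer map genuinely extends from the étale locus of $f$ to all of $X$ and that no spurious Tate twist is introduced. Once this direct-summand structure is in place the rest of the argument is formal, and the role of \Cref{thm:splitting_via_irr_complex} (together with \Cref{lem:pre_m_rational_via_intersection_db_isomorphism}) is precisely to convert the retract statement into the singularity statement.
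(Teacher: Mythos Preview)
Your argument is correct, but it is not the route the paper takes. You split $\mathcal{IC}^H_X$ off $f_\ast\mathcal{IC}^H_Y$ directly in $D^b(\operatorname{MHM}(X))$ via a trace/transfer map, then push this retraction through $\operatorname{gr}^F_{-p}\operatorname{DR}$ to conclude that $\mathcal{I}\underline{\Omega}^p_X$ is a summand of a sheaf for $p\le m$; this bypasses both the pre-$m$-Du~Bois step and the identification with reflexive differentials. The paper, by contrast, only constructs the \emph{forward} map $\mathcal{IC}^H_X\to f_\ast\mathcal{IC}^H_Y$ (via $\operatorname{gr}^W_{\dim X}\mathcal{H}^0$ of the adjunction on $\mathbb{Q}^H$), and then builds the splitting on the coherent side: it first uses \cite[Proposition~4.2(1)]{Shen/Venkatesh/Vo:2023} to get $X$ pre-$m$-Du~Bois, then invokes rational singularities and \Cref{lem:Kebekus_Schnell} to identify $\phi^p\colon\underline{\Omega}^p_X\to f_\ast\underline{\Omega}^p_Y$ with the map $\Omega^{[p]}_X\to f_\ast\Omega^{[p]}_Y$ on reflexive differentials, and splits \emph{that} via the classical trace (Zannier). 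The left inverse of $\psi^p_X$ is then assembled from this splitting together with $(\psi^p_Y)^{-1}$, and the conclusion comes from \Cref{thm:splitting_via_irr_complex}. Your approach is cleaner and needs less input (no pre-$m$-Du~Bois detour, no $\Omega^{[p]}$), at the cost of the MHM trace on $\mathcal{IC}^H$ that you yourself flag as the delicate point; the paper's approach is more elementary on the Hodge-module side and more explicitly showcases the splitting criterion, which is after all the point of the paper. It is worth noting that the authors mention Hyunsuk Kim independently obtained the result ``using a trace morphism on the Du~Bois complex'', which is essentially your second route.
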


\begin{proof}
  First, we will show that there is a natural morphism of Hodge modules
  $\mathcal{IC}_X^H \to f_\ast\mathcal{IC}_Y^H$. We have the following natural
  morphism in $D^b(\operatorname{MHM}(X))$
  \begin{displaymath}
    \mathbb{Q}^H_X [\dim X] \to f_\ast \mathbb{Q}^H_Y [\dim Y].
  \end{displaymath}
  As $f$ is finite, $f_\ast\colon D^b(\operatorname{MHM}(Y)) \to D^b(\operatorname{MHM}(X))$ is $t$-exact, i.e., if $M \in \operatorname{MHM}(Y)$, then $f_\ast M \in \operatorname{MHM}(X)$. So, we get the following by taking $\mathcal{H}^0$ on both sides
  \begin{displaymath}
    \mathcal{H}^0(\mathbb{Q}^H_X [\dim X]) \to f_\ast \mathcal{H}^0(\mathbb{Q}^H_Y
    [\dim Y]). 
  \end{displaymath}
  If we take $\operatorname{gr}^W_{\dim X}$ on both sides, this commutes with
  $f_\ast$ because of the weight spectral sequence (see \cite[Theorem
  4.13]{Kebekus/Schnell:2021} or \cite[Proposition 2.15]{Saito:1990}). Thus, we get a
  natural map
  \begin{displaymath}
    \mathcal{IC}_X^H \simeq \operatorname{gr}^W_{\dim X}
    (\mathcal{H}^0(\mathbb{Q}^H_X [\dim X])) \to f_\ast \operatorname{gr}^W_{\dim
      X}(\mathcal{H}^0(\mathbb{Q}^H_Y [\dim Y])) \simeq f_\ast\mathcal{IC}_Y^H, 
  \end{displaymath}
  which fits in the following commutative diagram
  \begin{displaymath}
    % https://q.uiver.app/#q=WzAsNCxbMCwwLCJcXG1hdGhiYntRfV5IX1ggW1xcZGltIFhdIl0sWzEsMCwiXFxwaV8qIFxcbWF0aGJie1F9XkhfWSBbXFxkaW0gWV0iXSxbMSwxLCJcXHBpXyogXFxtYXRoY2Fse0lDfV9ZXkguIl0sWzAsMSwiXFxtYXRoY2Fse0lDfV9YXkgiXSxbMCwxXSxbMSwyXSxbMCwzXSxbMywyXV0= 
    \begin{tikzcd}
      {\mathbb{Q}^H_X [\dim X]} & {f_\ast \mathbb{Q}^H_Y [\dim Y]} \\
      {\mathcal{IC}_X^H} & {f_\ast \mathcal{IC}_Y^H.}
      \arrow[from=1-1, to=1-2]
      \arrow[from=1-1, to=2-1]
      \arrow[from=1-2, to=2-2]
      \arrow[from=2-1, to=2-2]
    \end{tikzcd}
  \end{displaymath}
  Now, for any $p \in \mathbb{Z}$, we apply
  $\operatorname{gr}^F_{-p} \operatorname{DR}_X (\cdot)[p- \dim X]$ to all the
  objects in the diagram and use the fact that
  $\operatorname{gr}^F_{-p} \operatorname{DR}$ commutes with pushforward for proper maps (see
  \cite[\S 2.3.7]{Saito:1988}). As $f$ is finite, $f_\ast=\sR f_*$, so we get the commuting diagram
  \begin{displaymath}
    % https://q.uiver.app/#q=WzAsNCxbMCwwLCJcXHVuZGVybGluZXtcXE9tZWdhfV5wX1giXSxbMSwwLCJcXG1hdGhjYWx7Un1mX1xcYXN0IFxcdW5kZXJsaW5le1xcT21lZ2F9XnBfWSJdLFsxLDEsIlxcbWF0aGNhbHtSfWZfXFxhc3QgXFxtYXRoY2Fse0l9XFx1bmRlcmxpbmV7XFxPbWVnYX1ecF9ZLiJdLFswLDEsIlxcbWF0aGNhbHtJfVxcdW5kZXJsaW5le1xcT21lZ2F9XnBfWCJdLFswLDEsIlxccGhpXnAiXSxbMCwzLCJcXHBzaV9YXnAiLDJdLFszLDIsIlxcbWF0aGNhbHtJfVxccGhpXnAiXSxbMSwyLCJcXHBzaV9ZXnAiXV0=
    \begin{tikzcd}
      {\underline{\Omega}^p_X} & {%\sR
        f_\ast \underline{\Omega}^p_Y} \\
      {\mathcal{I}\underline{\Omega}^p_X} & {%\sR
        f_\ast
        \mathcal{I}\underline{\Omega}^p_Y.} 
      \arrow["{\phi^p}", from=1-1, to=1-2]
      \arrow["{\psi_X^p}"', from=1-1, to=2-1]
      \arrow["{\psi_Y^p}", from=1-2, to=2-2]
      \arrow["{\mathcal{I}\phi^p}", from=2-1, to=2-2]
    \end{tikzcd}
  \end{displaymath}
  We now prove that $X$ is pre-$m$-rational closely following the strategy of
  \cite[Proposition 4.2(1)]{Shen/Venkatesh/Vo:2023}. Observe first that $Y$ has
  rational singularities and so $X$ does as well by \cite[Theorem
  1]{Kovacs:2000}. Since $Y$ is pre-$m$-rational, for any $p \leq m$, both $%\sR
  f_\ast \underline{\Omega}^p_Y$ and $%\sR
  f_\ast \mathcal{I}\underline{\Omega}^p_Y$ are sheaves, and $\psi^p_Y$ is an
  isomorphism by \Cref{lem:pre_m_rational_via_intersection_db_isomorphism}. $X$ is
  pre-$m$-Du~Bois by \cite[Proposition 4.2(1)]{Shen/Venkatesh/Vo:2023}, and so
  $\underline{\Omega}^p_X \simeq
  \mathcal{H}^0(\underline{\Omega}^p_X)$. Additionally, since $X$ and $Y$ have
  rational singularities, we have
  $\mathcal{H}^0(\underline{\Omega}^p_X) \simeq
  \mathcal{H}^0(\mathcal{I}\underline{\Omega}^p_X) \simeq \Omega^{[p]}_X$ and $%\sR
  f_\ast \underline{\Omega}^p_Y \simeq f_\ast \Omega^{[p]}_Y$ by
  \Cref{lem:Kebekus_Schnell} and \cite[Corollary~3.18]{Kovacs:2025}. Therefore,
  exactly as in the proof of \cite[Proposition 4.2(1)]{Shen/Venkatesh/Vo:2023},
  $\phi^p$ admits a left inverse by \cite{Zannier:1999}, which we denote by
  $\xi^p$. Then $\xi^p \circ (\psi^p_Y)^{-1} \circ \mathcal{I}\phi^p$ is a left
  inverse of $\psi^p_X$, and then \Cref{thm:splitting_via_irr_complex} implies that
  $X$ is pre-$m$-rational.
  \end{proof}

\bibliographystyle{skalpha}
\bibliography{mainbib}

\providecommand{\bysame}{\leavevmode\hbox to3em{\hrulefill}\thinspace}
\providecommand{\MR}{\relax\ifhmode\unskip\space\fi MR}
% \MRhref is called by the amsart/book/proc definition of \MR.
\providecommand{\MRhref}[2]{%
  \href{http://www.ams.org/mathscinet-getitem?mr=#1}{#2}
}
\providecommand{\href}[2]{#2}
\begin{thebibliography}{MOPW23}

\bibitem[Art66]{Artin:1966}
{\sc M.~Artin}: \emph{On isolated rational singularities of surfaces}, Am. J. Math. \textbf{88} (1966), 129--136 (English).

\bibitem[Bha12]{Bhatt:2012}
{\sc B.~Bhatt}: \emph{Derived splinters in positive characteristic}, Compos. Math. \textbf{148} (2012), no.~6, 1757--1786 (English).

\bibitem[DOR25]{Dirks/Olano/Raychaudhury:2025}
{\sc B.~Dirks, S.~Olano, and D.~Raychaudhury}: \emph{A {H}odge {T}heoretic generalization of {$\mathbb{Q}$}-{H}omology {M}anifolds}, 2025.

\bibitem[DB81]{DuBois:1981}
{\sc P.~Du~Bois}: \emph{Complexe de de {Rham} filtre d'une vari{\'e}t{\'e} singuli{\`e}re}, Bull. Soc. Math. Fr. \textbf{109} (1981), 41--81 (French).

\bibitem[Elk81]{Elkik81}
{\sc R.~Elkik}: \emph{Rationalit\'e des singularit\'es canoniques}, Invent. Math. \textbf{64} (1981), no.~1, 1--6.

\bibitem[FL24a]{Friedman/Laza:2024b}
{\sc R.~Friedman and R.~Laza}: \emph{The higher {Du} {Bois} and higher rational properties for isolated singularities}, J. Algebr. Geom. \textbf{33} (2024), no.~3, 493--520 (English).

\bibitem[FL24b]{Friedman/Laza:2024}
{\sc R.~Friedman and R.~Laza}: \emph{Higher {Du} {Bois} and higher rational singularities}, Duke Math. J. \textbf{173} (2024), no.~10, 1839--1881 (English).

\bibitem[GNPP88]{Guillen/NavarroAznar/Pascual-Gainza/Puerta:1988}
{\sc F.~Guill{\'e}n, V.~Navarro{\ }Aznar, P.~Pascual{\ }Gainza, and F.~Puerta}: \emph{Hyperr\'esolutions cubiques et descente cohomologique}, Lecture Notes in Mathematics, vol. 1335, Springer-Verlag, Berlin, 1988, Papers from the Seminar on Hodge-Deligne Theory held in Barcelona, 1982.

\bibitem[JKSY22]{Jung/Kim/Saito/Yoon:2022}
{\sc S.-J. Jung, I.-K. Kim, M.~Saito, and Y.~Yoon}: \emph{Higher {D}u {Bois} singularities of hypersurfaces}, Proc. Lond. Math. Soc. (3) \textbf{125} (2022), no.~3, 543--567 (English).

\bibitem[KKP08]{Katzarkov/Kontsevich/Pantev:2008}
{\sc L.~Katzarkov, M.~Kontsevich, and T.~Pantev}: \emph{Hodge theoretic aspects of mirror symmetry}, From Hodge theory to integrability and TQFT tt*-geometry. International workshop From TQFT to tt* and integrability, Augsburg, Germany, May 25--29, 2007, Providence, RI: American Mathematical Society (AMS), 2008, pp.~87--174 (English).

\bibitem[KS21]{Kebekus/Schnell:2021}
{\sc S.~Kebekus and C.~Schnell}: \emph{Extending holomorphic forms from the regular locus of a complex space to a resolution of singularities}, J. Am. Math. Soc. \textbf{34} (2021), no.~2, 315--368 (English).

\bibitem[Kim25]{Kim:2025}
{\sc H.~Kim}: \emph{Trace for the {D}u {B}ois complex}, \href{https://arXiv.org/abs/2507.07350}{arXiv:2507.07350}, 2025.

\bibitem[KK10]{Kollar/Kovacs:2010}
{\sc J.~Koll{\'a}r and S.~J. Kov{\'a}cs}: \emph{Log canonical singularities are {Du} {Bois}}, J. Am. Math. Soc. \textbf{23} (2010), no.~3, 791--813 (English).

\bibitem[Kov99]{Kovacs:1999}
{\sc S.~J. Kov{\'a}cs}: \emph{Rational, log canonical, {Du} {Bois} singularities: {On} the conjectures of {Koll{\'a}r} and {Steenbrink}}, Compos. Math. \textbf{118} (1999), no.~2, 123--133 (English).

\bibitem[Kov00]{Kovacs:2000}
{\sc S.~J. Kov{\'a}cs}: \emph{A characterization of rational singularities}, Duke Math. J. \textbf{102} (2000), no.~2, 187--191 (English).

\bibitem[Kov25]{Kovacs:2025}
{\sc S.~J. Kov{\'a}cs}: \emph{Complexes of differential forms and singularities: The injectivity theorem}, 2025.

\bibitem[KS16]{KS13}
{\sc S.~J. Kov{\'a}cs and K.~Schwede}: \emph{Inversion of adjunction for rational and {D}u~{B}ois pairs}, Algebra Number Theory \textbf{10} (2016), no.~5, 969--1000.

\bibitem[Lan25]{Lank:2025}
{\sc P.~Lank}: \emph{Vanishing of higher cohomology and cohomological rationality}, 2025.

\bibitem[Lip69]{Lipman:1969}
{\sc J.~Lipman}: \emph{Rational singularities, with applications to algebraic surfaces and unique factorization}, Publ. Math., Inst. Hautes {\'E}tud. Sci. \textbf{36} (1969), 195--279 (English).

\bibitem[Mur24]{Murayama:2024}
{\sc T.~Murayama}: \emph{Relative vanishing theorems for $\mathbf{Q}$-schemes}, 2024.

\bibitem[MOPW23]{Mustata/Olano/Popa/Witaszek:2023}
{\sc M.~Musta{\c{t}}{\u{a}}, S.~Olano, M.~Popa, and J.~Witaszek}: \emph{The {Du} {Bois} complex of a hypersurface and the minimal exponent}, Duke Math. J. \textbf{172} (2023), no.~7, 1411--1436 (English).

\bibitem[MP22]{Mustata/Popa:2022}
{\sc M.~Musta{\c{t}}{\u{a}} and M.~Popa}: \emph{Hodge filtration on local cohomology, {Du} {Bois} complex and local cohomological dimension}, Forum Math. Pi \textbf{10} (2022), 58 (English), Id/No e22.

\bibitem[MP25]{Mustata/Popa:2025}
{\sc M.~Musta{\c{t}}{\u{a}} and M.~Popa}: \emph{On {{\(k\)}}-rational and {{\(k\)}}-{Du} {Bois} local complete intersections}, Algebr. Geom. \textbf{12} (2025), no.~2, 237--261 (English).

\bibitem[PP24]{Park/Popa:2024}
{\sc S.~G. Park and M.~Popa}: \emph{Lefschetz theorems, {Q}-factoriality, and {H}odge symmetry for singular varieties}, 2024.

\bibitem[PS08]{Peters/Steenbrink:2008}
{\sc C.~A.~M. Peters and J.~H.~M. Steenbrink}: \emph{Mixed {Hodge} structures}, Ergeb. Math. Grenzgeb., 3. Folge, vol.~52, Berlin: Springer, 2008 (English).

\bibitem[PSV24]{Popa/Shen/DucVo:2024}
{\sc M.~Popa, W.~Shen, and A.~D. Vo}: \emph{Injectivity and {V}anishing for the {D}u {B}ois {C}omplexes of {I}solated {S}ingularities}, 2024.

\bibitem[Sai88]{Saito:1988}
{\sc M.~Saito}: \emph{Modules de {Hodge} polarisables. ({Polarisable} {Hodge} modules)}, Publ. Res. Inst. Math. Sci. \textbf{24} (1988), no.~6, 849--995 (French).

\bibitem[Sai89]{Saito:1989}
{\sc M.~Saito}: \emph{Introduction to mixed {Hodge} modules}, Actes du colloque de th\'eorie de Hodge. Luminy, France, 1-6 Juin 1987, Paris: Soci{\'e}t{\'e} Math{\'e}matique de France, 1989, pp.~145--162 (English).

\bibitem[Sai90]{Saito:1990}
{\sc M.~Saito}: \emph{Mixed {Hodge} modules}, Publ. Res. Inst. Math. Sci. \textbf{26} (1990), no.~2, 221--333 (English).

\bibitem[Sai00]{Saito:2000}
{\sc M.~Saito}: \emph{Mixed {Hodge} complexes on algebraic varieties}, Math. Ann. \textbf{316} (2000), no.~2, 283--331 (English).

\bibitem[Sch07]{Schwede:2007}
{\sc K.~Schwede}: \emph{A simple characterization of {Du} {Bois} singularities}, Compos. Math. \textbf{143} (2007), no.~4, 813--828 (English).

\bibitem[SVV23]{Shen/Venkatesh/Vo:2023}
{\sc W.~Shen, S.~Venkatesh, and A.~D. Vo}: \emph{On $k$-{Du} {Bois} and $k$-rational singularities}, 2023.

\bibitem[Ste81]{Steenbrink:1981}
{\sc J.~H.~M. Steenbrink}: \emph{Cohomologically insignificant degenerations}, Compos. Math. \textbf{42} (1981), 315--320 (English).

\bibitem[Ste83]{Steenbrink:1983}
{\sc J.~H.~M. Steenbrink}: \emph{Mixed {Hodge} structures associated with isolated singularities}, 1983.

\bibitem[Ste85]{Steenbrink85}
{\sc J.~H.~M. Steenbrink}: \emph{Vanishing theorems on singular spaces}, Ast\'erisque (1985), no.~130, 330--341, Differential systems and singularities (Luminy, 1983).

\bibitem[Tig24]{Tighe:2024}
{\sc B.~Tighe}: \emph{The {H}olomorphic {E}xtension {P}roperty for {H}igher {D}u {B}ois {S}ingularities}, 2024.

\bibitem[Vo25]{DucVo:2025}
{\sc A.~D. Vo}: \emph{Du {B}ois complexes and singularity theory}, Ph.{D}. {D}issertation, Harvard University, Cambridge, MA, 2025, Department of Mathematics.

\bibitem[Zan99]{Zannier:1999}
{\sc U.~Zannier}: \emph{A note on traces of differential forms}, J. Pure Appl. Algebra \textbf{142} (1999), no.~1, 91--97 (English).

\end{thebibliography}

\end{document}